\numberwithin{equation}{section}
\newtheorem{Theorem}{Theorem}[section]
\newtheorem{Corollary}[Theorem]{Corollary}
\newtheorem{Lemma}[Theorem]{Lemma}
 { \theoremstyle{definition}
\newtheorem{Definition}[Theorem]{Definition}
\newtheorem{Example}[Theorem]{Example}
\newtheorem{Remark}[Theorem]{Remark} }
\newcommand{\dee}{\partial}
\newcommand{\K}{K[\dee_t,\dee_{x_1}, \dots, \dee_{x_n}]}
\newcommand{\diffs}{\dee_t,\dee_{x_1}, \dots, \dee_{x_n}}
\newcommand{\Ko}{K}
\newcommand{\ci}{c_{p_1 p_2 \dots p_n}}
\newcommand{\vars}{x_1, \dots, x_n}
\begin{document}

\allowdisplaybreaks

\newcommand{\arXivNumber}{1605.04362}

\renewcommand{\PaperNumber}{010}

\FirstPageHeading

\ShortArticleName{Classif\/ication of Multidimensional Darboux Transformations}

\ArticleName{Classif\/ication of Multidimensional Darboux\\ Transformations: First Order and Continued Type}

\Author{David HOBBY and Ekaterina SHEMYAKOVA}

\AuthorNameForHeading{D.~Hobby and E.~Shemyakova}

\Address{1 Hawk dr., Department of Mathematics, State University of New York at New Paltz, USA}
\Email{\href{mailto:hobbyd@newpaltz.edu}{hobbyd@newpaltz.edu}, \href{mailto:shemyake@newpaltz.edu}{shemyake@newpaltz.edu}}

\ArticleDates{Received October 10, 2016, in f\/inal form February 14, 2017; Published online February 24, 2017}

\Abstract{We analyze Darboux transformations in very general settings for multidimensional linear partial dif\/ferential operators. We consider all known types of Darboux transformations, and present a new type. We obtain a full classif\/ication of all operators that admit Wronskian type Darboux transformations of f\/irst order and a complete description of all possible f\/irst-order Darboux transformations. We introduce a large class of invertible Darboux transformations of higher order, which we call Darboux transformations of continued Type~I. This generalizes the class of Darboux transformations of Type~I, which was previously introduced. There is also a modif\/ication of this type of Darboux transformations, continued Wronskian type, which generalize Wronskian type Darboux transformations.}

\Keywords{Darboux transformations; Laplace transformations; linear partial dif\/ferential operators; continued Darboux transformations}

\Classification{16S32; 37K35; 37K25}

\section{Introduction}

A Darboux transformation (DT), in the general sense of the word, is a transformation between dif\/ferential operators that simultaneously transforms their kernels (solution spaces) or eigenspaces. DTs originated in the work of Darboux and others on the theory of surfaces, as in~\cite{Darboux2}, while particular examples were known to Euler and Laplace. They were rediscovered in integrable systems theory in the 1970s, where they were used for obtaining solutions of soliton-type equations with remarkable properties. They have also been studied by physicists for quantum-mechanical applications~\cite{infeld_hull_factorization_method,shroedinger_on_DT}. The very name `Darboux transformations' was introduced by V.B.~Matveev in~\cite{Matveev79}, where he played a central role in creating the Darboux transformation method in soliton theory. This theory was elaborated in the fundamental monograph~\cite{matveev:1991:darboux} of V.B.~Matveev and M.A.~Salle (see also~\cite{doktorovleble2007dressing}). As has been pointed out by experts in the f\/ield such as Novikov, Darboux transformations play a more important role than merely as technical tools for constructing solutions.

While a large amount of work has been devoted to Darboux transformations, it has concentrated mainly on particular examples and specif\/ic operators, while a general theory was largely missing. Although starting from Darboux's own work, there have been ideas, observations, and conjectures that can be interpreted as elements of this sought-after general theory of Darboux transformations. In view of the numerous applications of Darboux transformations, the importance of developing such a theory cannot be overestimated. In recent years, important steps in this direction have been made. They are based on the algebraic approach in the center of which are so-called intertwining relations.

The model example of Darboux transformations is the following transformation of single variable Sturm--Liouville operators: $L\to L_1$, where $L=\dee_x^2+u(x)$, $L_1=\dee_x^2+u_1(x)$, and $u_1(x)$ is obtained from $u(x)$ by the formula $u_1(x)=u(x) + 2(\ln\varphi_0(x))_{xx}$. Here $\varphi_0(x)$ is a `seed' solution of the Sturm--Liouville equation $L\varphi_0=\lambda_0\varphi_0$ (with some f\/ixed $\lambda_0$).
Then the transformation $\varphi\mapsto (\dee_x-(\ln\varphi_0)_x)\varphi$ sends solutions of $L\varphi=\lambda\varphi$ to solutions of $L_1\psi=\lambda\psi$ (with the same $\lambda$). The seed solution is mapped to zero. (According to~\cite{novikov-dynnikov:1997}, this transformation was already known to Euler.) This example is a model in two ways.

First, the formula for the transformation of solutions can be written in terms of Wronskian determinants: $\varphi_1=W(\varphi_0,\varphi)/\varphi_0$. This generalizes to a construction based on several linearly independent seed solutions and higher-order Wronskian determinants (Crum~\cite{Crum:1955:1999} for Sturm--Liouville operators and Matveev~\cite{Matveev79} for general operators on the line).

Second, if one lets $M=\dee_x-(\ln\varphi_0)_x$, then the following identity is satisf\/ied:
\begin{gather}\label{intertw1}
ML=L_1M.
\end{gather}
This identity is equivalent to the relation between the old potential $u(x)$ and the new poten\-tial~$u_1(x)$.

In an abstract framework, if two operators with the same principal symbol, $L$ and $L_1$, satisfy~\eqref{intertw1} for some~$M$, then~\eqref{intertw1} is called the \emph{intertwining relation} and $M$ is (often) called the \emph{transformation operator}. One can see that if~\eqref{intertw1} is satisf\/ied, then the operator $M$ def\/ines a~linear transformation of the eigenspaces of $L$ to the eigenspaces of $L_1$ (for all eigenvalues). The relation~\eqref{intertw1} can be taken as a def\/inition of the DT. The intertwining relation~\eqref{intertw1} appeared, for Sturm--Liouville operators, in work of Shabat~\cite[equation~(19)]{shabat:1992}, Veselov--Shabat~\cite{veselov-shabat:dress1993}, and Bagrov--Samsonov~\cite{bagrov-samsonov:factorization1995}.
Intertwining relation~\eqref{intertw1} is also related with supersymmetric quantum mechanics initiated by E.~Witten~\cite{witten1982}, see in particular~\cite{Ioffe_Junker99, Cooper_Khare95}. In 2D case intertwining relation~\eqref{intertw1} was studied in the series of papers by
A.~Andrianov, F.~Cannata, M.~Iof\/fe, see, e.g.,~\cite{ioffe2010separation_of_vars} and references therein. It also appeared for higher dimensions, in Berest--Veselov~\cite{berest-veselov:1998, berest-veselov:2000} for the Laplace type operators $L=-\Delta +u$. The same relation was used in~\cite{2015:super} for dif\/ferential operators on the superline. The natural task that arises for such an algebraic def\/inition of DTs, is the classif\/ication of the DTs satisfying it. In the case of dif\/ferential operators on the line, it was established (in steps) that all DTs def\/ined this way arise from seed solutions and are given by Wronskian formulas. For the Sturm--Liouville operators, this was proved in~\cite[Theorem 5]{veselov-shabat:dress1993} when the new potential $u_1(x)$ dif\/fers from the initial $u(x)$ by a constant, $u_1(x)=u(x) + c$. It was proved in~\cite{shabat1995} for transformations of order two; and, f\/inally in the general case, in~\cite{bagrov-samsonov:factorization1995} and the follow-up paper~\cite{clouds:bagrov:samsonov:97}, see also~\cite[Section~3]{samsonov:factorization1999}. For general operators on the line, it was proved in~\cite{adler-marikhin-shabat:2001}. For the superline, the classif\/ication was obtained in~\cite{2015:super}.

The intertwining relation~\eqref{intertw1}, with a single transformation operator $M$, turns out to already be too restrictive for dif\/ferential operators in higher dimensions. We use the more f\/lexible intertwining relation
\begin{gather}\label{intertw2}
NL=L_1M.
\end{gather}
This can be extracted from the work of Darboux himself~\cite{Darboux2} (see for example~\cite[equation~(2)]{TsarevS2009}). There are several important dif\/ferences between the two kinds of intertwining relations. If~\eqref{intertw1} is satisf\/ied, then, as already mentioned, the operator $M$ transforms each eigenspace of $L$ to the eigenspace of~$L_1$ with the same eigenvalue. In contrast with that, if~\eqref{intertw2} is satisf\/ied, then we only have that~$M$ maps the kernel of $L$ to the kernel of~$L_1$. (It is in general false that $M$ maps eigenspaces of $L$ with nonzero eigenvalues to eigenspaces of~$L_1$.) In~\cite{shemyakova2013_DT_fact, shem:darboux2}, a general framework for Darboux transformations def\/ined by intertwining relation~\eqref{intertw2} was put forward, which in particular allowed the proof of a~long-standing conjecture of Darboux on factorization of the DTs for 2D second-order hyperbolic operators (the `2D Schr\"odinger operator')\footnote{Calling it Schr\"odinger is some abuse of language justif\/ied by the fact that on the formal algebraic level it is equivalent to the actual elliptic Schr\"odinger operator~\cite{novikov-dynnikov:1997}.}. As we explain in greater detail in the paper, it is natural to introduce an equivalence relation between pairs $(M,N)$ in conjunction with the intertwining relation~\eqref{intertw2}. This gives extra f\/lexibility and (as explained below, see Section~\ref{sec:Known_types_of_Darboux transformations}) makes it possible to have invertible DTs. Such an equivalence relation does not exist in the case of the intertwining relation~\eqref{intertw1} with $M=N$. It remains an open problem to analyze fully the connections between the intertwining relations~\eqref{intertw1} and~\eqref{intertw2}. In~\cite{2015:inv:charts}, a completely new class of transformations, Darboux transformations of Type~I, was described. These are for operators of very general form, and are analogous to Laplace transformations. In~\cite{2015:super}, a complete classif\/ication of Darboux transformations of arbitrary nondegenerate operators on the superline was obtained. In~\cite{shemya:voronov2016:berezinians} it was proved that every Darboux transformation can be obtained in terms of super-Wronskians, which are based on Berezinians, analogues of determinants in the super case.

Among DTs def\/ined by our intertwining relation~\eqref{intertw2} there are non-Wronskian type Darboux transformations such as the classical Laplace transformations for the hyperbolic second-order operator in the plane, sometimes called the 2D Schr\"odinger operator. In~\cite{shemyakova2013_DT_fact,shem:darboux2} it was proved that every Darboux transformation for the class of operators is a composition of atomic Darboux transformations of two types: Wronskian type and Laplace transformations (a conjecture that can be traced back to Darboux). Since~\eqref{intertw2} includes \eqref{intertw1} as a special case, we will henceforth use {\em intertwining relation} to refer to~\eqref{intertw2}.

In this paper we recall the general algebraic framework for Darboux transformations based on intertwining relations and then analyze the various types of Darboux transformations that may arise. We partially answer the following questions in the general theory of Darboux transformations:
\begin{itemize}\itemsep=0pt
 \item \textit{Which classes of operators admit Darboux transformations of Wronskian type?}
 \item \textit{Are there any new types of invertible Darboux transformations?}
 \item \textit{What is the qualitative difference between these two types?}
\end{itemize}

For more than a century, only two types of transformations of linear partial dif\/ferential operators satisfying our intertwining relation (Darboux transformations) were known: the transformations def\/ined by Wronskian formulas, and Laplace transformations. Note that Wronskian formulas def\/ine DTs for several dif\/ferent kinds of operators, see~\cite{Matveev79,matveev:1991:darboux}, while Laplace transformations are def\/ined only for 2D Schr\"odinger operators. Laplace transformations have a number of good properties including invertibility. Recently, two dif\/ferent generalizations of Laplace transformations were proposed: ``intertwining Laplace transformations'' in~\cite{ganzha2013intertwining} and ``Type~I transformations''. (The latter are always invertible.) In the present paper we clarify their relation and introduce a further generalization, which we call transformations of ``continued Type~I''. (This term comes from the construction, which has some resemblance with that of continued fractions.) A similar construction can also be applied starting from f\/irst-order DTs of Wronskian type, we say the resulting DTs are of ``continued Wronskian type''.

We also obtain a full description of f\/irst-order DTs and the dif\/ferential operators that admit them\footnote{Actually, we consider transformations with $M= \partial_t +m$, where $m \in K$ and $t$ is a distinguished variable. It should be possible to reduce a general f\/irst-order operator $M$ to this form by a change of variables.}. Moreover, we show that f\/irst-order DTs always fall into two types: Wronskian type and Type~I.
These two dif\/ferent cases possess a unif\/ied algebraic presentation (see Theorem~\ref{thm:commutator}). Our test makes it possible to tell which type of DTs are admitted by a given operator.

The structure of the paper is as follows. In Section~\ref{sec:Known_types_of_Darboux transformations}, we recall key results on the category of Darboux transformations as introduced in~\cite{shemyakova2013_DT_fact,shem:darboux2} and list known types of Darboux transformations. In Section~\ref{sec:wronski}, we give a criterion for a general operator to admit a f\/irst-order Wronskian Darboux transformation (Theorem~\ref{thm:wronski_type_char}). In Section~\ref{sec:1storder_classification} we classify f\/irst-order DTs. In Section~\ref{sec:it}, we introduce continued Type~I Darboux transformations and prove they are always invertible. In Section~\ref{sec:weak:wr}, we give a similar construction of DTs of continued Wronskian type.

\section{Types of Darboux transformations}\label{sec:Known_types_of_Darboux transformations}

In this section we recall some general facts about Darboux transformations. Consider a dif\/fe\-ren\-tial f\/ield $\Ko$ of characteristic zero with commuting deriva\-tions~$\diffs$. We use one distinguished letter~$t$ to denote the variable which will play a special role. The letter~$t$ need not represent time, and was chosen for convenience. By~$\K$, we denote the corresponding ring of linear partial dif\/ferential operators over~$K$. Operators in $\K$ of order~$0$ will often be considered as elements of $K$, and called ``functions''. One can either assume the f\/ield~$K$ to be dif\/ferentially closed, or simply assume that~$K$ contains the solutions of those partial dif\/ferential equations that we encounter on the way.

Darboux transformations viewed as mappings of linear partial dif\/ferential operators can be def\/ined algebraically as follows~\cite{shem:darboux2},
where we write $\sigma(L)$ for the principal symbol of~$L$.

\begin{Definition} \label{def:darb}
Consider a category, with objects all operators in $\K$ and morphisms the \textit{Darboux transformations} def\/ined as follows.
A morphism from an object $L$ to an object $L_1$ with $\sigma(L) = \sigma(L_1)$ is a pair $({M},{N})\in \K \times \K$ satisfying the \textit{intertwining relation}
\begin{gather}\label{intertwining_relation}
{{N} {L} = {L}_1 {M}}.
\end{gather}

Composition of Darboux transformations is def\/ined by $(M,N)\circ(M_1,N_1) =(M_1 M ,N_1 N)$, where the left DT is applied f\/irst. For each object $L$, its \textit{identity morphism} $1_L$ is $(1,1)$.
\end{Definition}

Observe that given $NL = L_1M$, we have $\sigma(L) = \sigma(L_1)$ if\/f $\sigma(N) = \sigma(M)$. Note also that since there are only morphisms between objects with the same principal symbol, this category is partitioned into subcategories for each principal symbol.

\begin{Definition}\label{equivalence relation definition}
Two morphisms $(M,N)$ and $(M',N')$ from $L$ to $L_1$ are {\em equivalent}, written $(M,N) \sim (M',N')$, if\/f there exists $A \in \K$ with $M' = M + AL$ and $N' = N + L_1 A$.
\end{Definition}

\begin{Lemma} The relation $\sim$ in Definition~{\rm \ref{equivalence relation definition}} is an equivalence relation. When $(M,N)$ is a~morphism from~$L$ to $L_1$, so is every pair of the form $({M} +{A} {L} , {N} + {L}_1 {A})$. The relation $\sim$ is compatible with the composition.
\end{Lemma}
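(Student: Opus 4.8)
The plan is to verify the three claims of the lemma in turn, each reducing to a direct computation with the operator $A\in\K$ witnessing the relation $\sim$. The key algebraic fact I would use throughout is that $\K$ is a (noncommutative) ring, so the only operations involved are the associativity and distributivity of operator multiplication and addition; no analytic input is needed.

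First I would check that $\sim$ is an equivalence relation. For reflexivity, take $A=0$, which gives $M'=M$ and $N'=N$. For symmetry, suppose $(M,N)\sim(M',N')$ via $A$, so $M'=M+AL$ and $N'=N+L_1A$; then $M=M'+(-A)L$ and $N=N'+L_1(-A)$, so $-A$ witnesses $(M',N')\sim(M,N)$. For transitivity, if $(M,N)\sim(M',N')$ via $A$ and $(M',N')\sim(M'',N'')$ via $B$, then $M''=M'+BL=M+(A+B)L$ and $N''=N'+L_1B=N+L_1(A+B)$, so $A+B$ works.

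Next I would show that any pair of the form $(M+AL,\,N+L_1A)$ is again a morphism from $L$ to $L_1$, i.e.\ satisfies the intertwining relation~\eqref{intertwining_relation} and the symbol condition. For the intertwining relation, compute
\begin{gather*}
(N+L_1A)L = NL + L_1AL = L_1M + L_1AL = L_1(M+AL),
\end{gather*}
using $NL=L_1M$. For the symbol condition, note that by the observation following Definition~\ref{def:darb}, it suffices to check $\sigma(M+AL)=\sigma(N+L_1A)$; since $\sigma(L)=\sigma(L_1)$, the products $AL$ and $L_1A$ have equal principal symbols $\sigma(A)\sigma(L)=\sigma(L_1)\sigma(A)$ (the principal symbols commute as they lie in the commutative polynomial ring of symbols), and adding these to the already-equal symbols $\sigma(M)=\sigma(N)$ preserves the equality. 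Thus the new pair is a genuine morphism $L\to L_1$, which in particular re-proves reflexivity of $\sim$ on the level of well-defined morphisms.

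Finally I would verify compatibility of $\sim$ with composition: if $(M,N)\sim(M',N')$ as morphisms $L\to L_1$ and $(M_1,N_1)\sim(M_1',N_1')$ as morphisms $L_1\to L_2$, I must show $(M_1M,N_1N)\sim(M_1'M',N_1'N')$ as morphisms $L\to L_2$. Writing $M'=M+AL$, $N'=N+L_1A$, $M_1'=M_1+BL_1$, $N_1'=N_1+L_2B$, I would expand $M_1'M'=(M_1+BL_1)(M+AL)=M_1M + M_1AL + BL_1M + BL_1AL$, and similarly for $N_1'N'$, then use $L_1M=NL$ to collect everything into the form $M_1M + CL$ for an explicit $C\in\K$; the analogous computation for the $N$-side should produce $N_1N + L_2C$ with the \emph{same} $C$. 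The main obstacle, and the only step requiring care, is this last point: checking that the two sides really share a common witness $C$. This is forced by the intertwining relations, and the cleanest bookkeeping is to substitute $L_1M=NL$ (and $N_1L_1=L_2M_1$) at the right moment so that the cross terms reassemble; I expect $C = M_1A + BN$ to work, and the verification amounts to confirming $N_1'N' = N_1N + L_2(M_1A+BN)$ matches the expansion, which again uses only the intertwining relations and distributivity.
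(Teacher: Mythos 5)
Your proposal follows the paper's proof essentially step for step: the paper dismisses the equivalence-relation claim as immediate (you spell out reflexivity, symmetry, transitivity, which is fine), verifies the intertwining relation for $(M+AL,\,N+L_1A)$ by the same one-line computation $(N+L_1A)L=L_1(M+AL)$, and proves compatibility with composition by expanding both products and exhibiting a common witness $C$ obtained by substituting $L_1M=NL$ and $N_1L_1=L_2M_1$. The only slip is in your guessed witness: $C=M_1A+BN$ is missing a term. From the expansion you yourself wrote down,
\begin{gather*}
(M_1+BL_1)(M+AL)=M_1M+M_1AL+BL_1M+BL_1AL=M_1M+\bigl(M_1A+BN+BL_1A\bigr)L,
\end{gather*}
since $BL_1M=BNL$, and on the other side
\begin{gather*}
(N_1+L_2B)(N+L_1A)=N_1N+L_2M_1A+L_2BN+L_2BL_1A=N_1N+L_2\bigl(M_1A+BN+BL_1A\bigr),
\end{gather*}
since $N_1L_1A=L_2M_1A$. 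So the common witness is $C=M_1A+BN+BL_1A$ (exactly what the paper finds, written as $BN+BL_1A+M_1A$); the cross term $BL_1AL$ coming from the product of the two correction terms does not vanish and must be absorbed into $C$. With that correction your argument is complete and coincides with the paper's.
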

\begin{proof} By construction, $\sim$ is clearly an equivalence relation. To see the next statement, suppose a pair~$(M,N)$ satisf\/ies~\eqref{intertwining_relation}, and consider an equivalent pair $(M+AL,N+L_1A)$. We have $(N+L_1A)L=NL+L_1AL=L_1M + L_1AL=L_1(M+AL)$, so it also satisf\/ies the intertwining relation.

Finally, we show that $\sim$ is compatible with composition. Consider the Darboux transformations $(M,N)\colon L \rightarrow L_1$, and $(M_1,N_1)\colon L_1 \rightarrow L_2$, where all operators are in~$\K$. Taking dif\/ferent representatives in the same equivalence classes, we have $(M+AL,N+L_1A)\colon$ $L \rightarrow L_1$, and $(M_1+BL_1,N_1+L_2B)\colon L_1 \rightarrow L_2$ for some ${A}, {B} \in \K$. That is we have: $({N} + {L}_1 {A}) {L} = {L}_1 ({M} + {A} {L})$, and $({N}_1 + {L}_2 {B}) {L}_1 = {L}_2 ({M}_1 + {B} {L}_1)$. For the composition we have $(\widehat{M}, \widehat{N} )\colon L \rightarrow L_2$ with $\widehat{N}=({N}_1 + {L}_2 {B}) ({N} + {L}_1{A})$, $\widehat{M} = ({M}_1 + {B} {L}_1) ({M} + {A} {L})$. Using $NL=L_1M$ and ${N}_1 {L}_1=L_2 M_1$ we have that $\widehat{N}={N}_1 {N} + {L}_2 {C}$, and $\widehat{M} = {M}_1 {M} + C L$, where ${C}={B} {N}+B L_1{A} + M_1 A$.
This concludes the proof.
\end{proof}

Every Darboux transformation $(M,N)$ in a given equivalence class def\/ines the same linear mapping from $\ker L$ to $\ker L_1$, where each function~$\phi$ goes to $M[\phi]$. For if $\phi \in \ker L$ then $(M+AL)[\phi] = M[\phi] + A[L[\phi]] = M[\phi]+A[0] = M[\phi]$, so $M$ and $M+AL$ give the same linear map. This motivates our use of the equivalence relation $\sim$, as in Def\/inition~\ref{equivalence relation definition}.

Note that one may also view the $\sim$ classes as being the Darboux transformations, rather than the individual pairs of operators. This approach was taken in
\cite{shemyakova2013_DT_fact,2013:invertible:darboux, shem:darboux2} and \cite{2015:inv:charts}.

We def\/ine the {\em order} of a Darboux transformation $(M,N)$ as the minimum possible order of a~transformation in its equivalence class, that is the least possible order of an operator of the form $M+AL$. We also def\/ine invertibility in terms of equivalence classes, and say that the Darboux transformation $(M,N)\colon L \rightarrow L_1$ is {\em invertible} if\/f there exists $(M',N')\colon L_1 \rightarrow L$ such that the compositions $(M,N) \circ (M',N')$ and $(M',N') \circ (M,N)$ are equivalent to identity morphisms. Since equivalent morphisms give the same kernel maps, this is justif\/ied. Continuing this abuse of terminology, we say that the morphisms $(M,N)$ and $(M',N')$ are {\em inverses}, and so on. Note that with this def\/inition, a morphism may have multiple inverses, which will be equivalent but not equal.

This def\/inition gives us that $(M,N) \colon L \rightarrow L_1$ and $(M',N') \colon L_1 \rightarrow L$ are inverses if\/f there are some $A,B \in \K$ with
\begin{gather}
{{M'M=1+AL}} , \label{inv:prop1} \\
N'N=1+LA , \label{inv:nl} \\
{{MM'=1+BL_1}} , \label{inv:prop2} \\
NN'=1+L_1B . \label{inv:nl1}
\end{gather}

\begin{Lemma} Invertible Darboux transformations induce isomorphisms on the kernels of the operators~$L$ and~$L_1$.
\end{Lemma}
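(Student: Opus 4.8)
The plan is to exhibit the two kernel maps induced by $(M,N)$ and by its inverse $(M',N')$, and to show that they are mutually inverse bijections. First I would recall the observation made just before the statement: a Darboux transformation $(M,N)\colon L\to L_1$ carries $\ker L$ into $\ker L_1$ via $\phi\mapsto M[\phi]$. Indeed, if $L[\phi]=0$ then $L_1 M[\phi]=N L[\phi]=N[0]=0$ by the intertwining relation~\eqref{intertwining_relation}, so $M[\phi]\in\ker L_1$. Applying the same reasoning to the inverse morphism $(M',N')\colon L_1\to L$, the assignment $\psi\mapsto M'[\psi]$ carries $\ker L_1$ into $\ker L$. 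Both assignments are linear over the field of constants, since $M$ and $M'$ are linear differential operators.

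Next I would invoke invertibility in the explicit form recorded in~\eqref{inv:prop1} and~\eqref{inv:prop2}: there exist $A,B\in\K$ with $M'M=1+AL$ and $MM'=1+BL_1$. For $\phi\in\ker L$ this yields $M'[M[\phi]]=(M'M)[\phi]=\phi+A[L[\phi]]=\phi+A[0]=\phi$, so the composite $M'\circ M$ is the identity on $\ker L$. Symmetrically, for $\psi\in\ker L_1$ I would compute $M[M'[\psi]]=(MM')[\psi]=\psi+B[L_1[\psi]]=\psi+B[0]=\psi$, so $M\circ M'$ is the identity on $\ker L_1$.

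With both composites equal to the respective identity maps, the conclusion is immediate: the linear maps $M\colon\ker L\to\ker L_1$ and $M'\colon\ker L_1\to\ker L$ are mutually inverse, hence each is a linear isomorphism between the solution spaces. This is exactly the asserted isomorphism of kernels.

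There is no genuine obstacle in this argument; the statement is a direct consequence of the definitions, and the only computations needed are the two cancellations above. The one point worth flagging is that the relations on the $N$-components,~\eqref{inv:nl} and~\eqref{inv:nl1}, are not used here: the kernel isomorphism is governed entirely by the $M$-relations~\eqref{inv:prop1} and~\eqref{inv:prop2}, whereas the $N$-relations are what guarantee that $(M',N')$ is a genuine categorical inverse. Since well-definedness of the two kernel maps has already been verified in the text, only the short identities displayed above remain to be recorded.
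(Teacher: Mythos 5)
Your proof is correct and follows exactly the same route as the paper's: apply $M'M = 1 + AL$ and $MM' = 1 + BL_1$ to elements of the respective kernels to see that the induced maps are mutually inverse. The extra remarks on well-definedness and the non-use of the $N$-relations are accurate but do not change the argument.
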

\begin{proof}
Consider $\psi \in \ker L$, then from~\eqref{inv:prop1} we obtain that $M'M[\psi]=\psi$. Similarly for $\psi_1 \in \ker L_1$, \eqref{inv:prop2} gives us $MM'[\psi_1]=\psi_1$. Therefore, $M$ and~$M'$ induce mutually invertible maps between~$\ker L$ and~$\ker L_1$.
\end{proof}

In particular, \eqref{inv:prop1} implies that $\ker {L} \cap \ker {M}=\{0\}$ is necessary for a Darboux transformation to be invertible. Also note that the order of a Darboux transformation is not related in an obvious way to the order of its inverse.

The f\/irst steps in the general study of invertible Darboux transformations along with the analysis of one particular invertible class of Darboux transformations~-- Darboux transformations of Type~I~-- can be found in~\cite{2013:invertible:darboux,2015:inv:charts}.

\begin{Lemma} \label{lem:gn=ma}\quad
\begin{enumerate}\itemsep=0pt
 \item[$1.$] Given the intertwining relations $NL = L_1 M$ and $N' L_1 = LM'$, the equalities for~$N$,~$N'$ follow from the equalities for $M$, $M'$. Specifically, \eqref{inv:nl}~follows from~\eqref{inv:prop1}, and \eqref{inv:nl1} follows from \eqref{inv:prop2},
 \item[$2.$] If $(M,N)$ and $(M',N')$ are mutually inverse morphisms satisfying~\eqref{inv:prop1} through~\eqref{inv:nl1}, then $BN=MA$.
\end{enumerate}
\end{Lemma}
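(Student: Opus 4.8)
The plan is to lean on two structural facts about $\K$ together with the two given intertwining relations. First, $\K$ is a (noncommutative) integral domain: the principal symbol map is multiplicative, $\sigma(PQ)=\sigma(P)\sigma(Q)$, with values in the commutative polynomial ring $K[\xi_t,\xi_{x_1},\dots,\xi_{x_n}]$, which has no zero divisors, so $\sigma(PQ)=\sigma(P)\sigma(Q)\neq0$ whenever $P,Q\neq0$. Consequently a nonzero operator may be cancelled from either side of an equation. Second, associativity lets me evaluate a triple product under two different groupings. Throughout I use $NL=L_1M$ and $N'L_1=LM'$ to pass between the $M$- and $N$-sides.

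For part~1, I would derive \eqref{inv:nl} from \eqref{inv:prop1} by computing the product $N'NL$. Reassociating and applying the two intertwining relations gives $N'NL=N'(NL)=N'L_1M=(LM')M=L(M'M)$, and substituting \eqref{inv:prop1} yields $L(1+AL)=(1+LA)L$. Hence $(N'N-1-LA)L=0$, and cancelling the nonzero operator $L$ on the right gives $N'N=1+LA$, which is \eqref{inv:nl}. The derivation of \eqref{inv:nl1} from \eqref{inv:prop2} is symmetric: one computes $NN'L_1=N(N'L_1)=N(LM')=(NL)M'=L_1(MM')=L_1(1+BL_1)=(1+L_1B)L_1$ and right-cancels $L_1$.

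For part~2, I would compute the triple product $MM'M$ under two groupings. Using \eqref{inv:prop2} on the left pair, $MM'M=(1+BL_1)M=M+BL_1M$; using \eqref{inv:prop1} on the right pair, $MM'M=M(1+AL)=M+MAL$. Equating the two expressions gives $BL_1M=MAL$. The intertwining relation $L_1M=NL$ rewrites the left-hand side as $BNL$, so $BNL=MAL$, that is $(BN-MA)L=0$; cancelling $L$ on the right yields $BN=MA$.

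The only genuinely delicate point is the cancellation step, which is where the content sits: it relies on $\K$ being a domain, so that $PL=QL$ (or $LP=LQ$) forces $P=Q$ whenever the cancelled factor is nonzero. I would record this domain property explicitly, since each equality in the lemma is obtained by stripping a common factor of $L$ or $L_1$, and I would note that $L,L_1\neq0$ (they share the principal symbol $\sigma(L)=\sigma(L_1)$), so the cancellations are legitimate. Everything else is bookkeeping with associativity and the two intertwining relations.
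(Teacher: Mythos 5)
Your proof is correct and follows essentially the same route as the paper's: part~1 is the identical computation $N'NL = LM'M = L(1+AL) = (1+LA)L$ followed by right-cancellation of $L$, and part~2 is the mirror image of the paper's argument --- you expand the triple product $MM'M$ two ways and cancel $L$ on the right, whereas the paper expands $NN'N$ two ways and cancels $L_1$ on the left. Both versions rest on exactly the two ingredients you identify (associativity and the absence of zero divisors in $\K$), so the difference is purely cosmetic.
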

\begin{proof} 1. Assume that \eqref{inv:prop1} holds, so $M'M=1+AL$. Then $N'NL=LM'M=L(1+AL)=(1+LA)L$. Since $\K$ has no zero divisors, we get $N'N = 1+LA$ by cancellation, so \eqref{inv:nl} holds. That \eqref{inv:prop2} implies \eqref{inv:nl1} is proved analogously.

2. We know that $NN'=1+L_1B$. Then $NN'N=N+L_1BN$, which implies $N(1+LA)=N+L_1BN$. Thus, $NLA=L_1BN$, which implies $L_1MA=L_1BN$ using the intertwining relation $NL=L_1M$. Therefore, $BN=MA$ by cancellation.
\end{proof}

Recall that a \textit{gauge transformation} of an operator $L$ is def\/ined as $L^{g}= g^{-1} L g$, where $g \in \Ko$. Gauge transformations commute with Darboux transformations in the sense that if there is a~Darboux transformation $(M,N)\colon L \rightarrow L_1$, then there are also Darboux transformations
\begin{gather*}
\big(M^g,N^g\big)\colon \ L^g \rightarrow L_1^g , \qquad
(Mg,Ng)\colon \  L^g \rightarrow L_1 , \qquad
\big(g^{-1}M,g^{-1}N\big)\colon \ L \rightarrow L_1^g .
\end{gather*}

\begin{Definition}\label{shift definition}
If a pair $(M,N)$ def\/ines a Darboux transformation from $L$ to $L_1$, then the same pair def\/ines a dif\/ferent Darboux transformation from $L+AM$ to $L_1+NA$ for every $A \in \K$. We shall say that Darboux transformations $L \rightarrow L_1$ and $L+AM \rightarrow L_1+NA$ are related by a {\em shift}. Note that this is a symmetric relation.
\end{Definition}

The shift of the DT $(M,N) \colon L \rightarrow L_1$ is a DT. We have that $N(L+AM) = (L_1 + NA)M$ for the intertwining relation, and $\sigma(L) = \sigma(L_1)$ implies $\sigma(M) = \sigma(N)$ which implies $\sigma(L+AM) = \sigma(L_1 + NA)$.

In general, shifts of Darboux transformations do not commute with compositions of Darboux transformations. However, shifts are useful when dealing with invertible Darboux transformations.

\begin{Lemma}\label{shift invertibility lemma}
Shifts of Darboux transformations preserve invertibility.
\end{Lemma}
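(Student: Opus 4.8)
The plan is to produce an explicit inverse of the shifted transformation and then check the invertibility conditions \eqref{inv:prop1}--\eqref{inv:nl1}. Write the shift parameter as $A$, so the shifted Darboux transformation is $(M,N)\colon L+AM\to L_1+NA$; that this is a morphism was already verified after Definition~\ref{shift definition}. Since $(M,N)\colon L\to L_1$ is invertible, I fix an inverse $(M',N')\colon L_1\to L$ together with operators which, to avoid a clash with the shift parameter, I call $P,Q\in\K$, so that \eqref{inv:prop1}--\eqref{inv:nl1} read $M'M=1+PL$, $N'N=1+LP$, $MM'=1+QL_1$, $NN'=1+L_1Q$. The crucial extra ingredient is Lemma~\ref{lem:gn=ma}(2), which in this notation states $QN=MP$.

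The inverse I would propose for the shifted transformation is
\begin{gather*}
\big(\widehat M,\widehat N\big)=\big(M'+PA,\ N'+AQ\big)\colon\ L_1+NA\longrightarrow L+AM .
\end{gather*}
The guiding heuristic is the genuinely invertible case, where $P=Q=0$ and $(M',N')=(M^{-1},N^{-1})$ already inverts the shift; one therefore expects the general inverse to be $(M',N')$ corrected by terms linear in $P$, $Q$ and $A$.

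First I would verify that $\big(\widehat M,\widehat N\big)$ is a morphism $L_1+NA\to L+AM$, i.e.\ $\widehat N(L_1+NA)=(L+AM)\widehat M$. Expanding with $N'L_1=LM'$, $N'N=1+LP$ and $MM'=1+QL_1$, the uncorrected pair $(M',N')$ fails this relation by $\delta:=(L+AM)M'-N'(L_1+NA)=AQL_1-LPA$. Substituting $L_1=(L_1+NA)-NA$ and $L=(L+AM)-AM$ rewrites $\delta$ as $AQ(L_1+NA)-(L+AM)PA+A(MP-QN)A$; the last term vanishes precisely because $QN=MP$, and the surviving part $AQ(L_1+NA)-(L+AM)PA$ is exactly what the correction terms $AQ$ and $PA$ contribute, so the intertwining relation holds. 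This cancellation, resting on Lemma~\ref{lem:gn=ma}(2), is the heart of the matter and the step I expect to be the main obstacle to locate; the rest is bookkeeping.

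It then remains to check the conditions on the $M$-operators. A short expansion gives $\widehat M M=(M'+PA)M=(1+PL)+PAM=1+P(L+AM)$, which is \eqref{inv:prop1} for the shifted pair (with $P$ in place of $A$ and $L+AM$ in place of $L$), and $M\widehat M=MM'+MPA=(1+QL_1)+MPA=1+Q(L_1+NA)$, using $MP=QN$ again, which is \eqref{inv:prop2}. The two remaining identities, \eqref{inv:nl} and \eqref{inv:nl1} for the shifted pair, then follow at once from Lemma~\ref{lem:gn=ma}(1), since both intertwining relations are now available. Hence $\big(\widehat M,\widehat N\big)$ inverts the shifted transformation and invertibility is preserved.
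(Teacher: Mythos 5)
Your proposal is correct and coincides with the paper's own proof up to renaming: the paper writes the shift parameter as $C$ and the witnesses as $A,B$, and takes exactly your inverse $(M'+AC,\,N'+CB)$, verifying the intertwining relation via $BN=MA$ (your $QN=MP$ from Lemma~\ref{lem:gn=ma}(2)) and deferring the $N$-identities to Lemma~\ref{lem:gn=ma}(1) just as you do.
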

\begin{proof} Suppose we have a Darboux transformation $(M,N)\colon L \rightarrow L_1$ with $NL=L_1M$, which is invertible, and there is a Darboux transformation $(M',N')\colon L' \rightarrow L$ with $N'L_1=LM'$, and~\eqref{inv:prop1} and~\eqref{inv:prop2} hold, giving $M'M = 1+AL$ and $MM' = 1+BL_1$. Then for any $C \in \K$ we have the Darboux transformation $(M,N)\colon \widetilde{L} \rightarrow \widetilde{L_1}$, where $\widetilde{L}=L+CM$, and $\widetilde{L_1}=L_1+NC$. The inverse Darboux transformation for the shifted Darboux transformation is then given by $\widetilde{M'}=M'+AC$ and $\widetilde{N'}=N'+CB$. We f\/irst show $\widetilde{N'}\widetilde{L_1} = \widetilde{L}\widetilde{M'}$, or $(N'+CB)(L_1+NC)=(L+CM)(M'+AC)$. This is equivalent to $N'L_1+N'NC+CBL_1+CBNC=LM'+CMM'+CMAC+LAC$. Taking into account that $N'L_1=LM'$ and $BN=MA$, the last equality is equivalent to $N'NC+CBL_1=CMM'+LAC$, which is equivalent to $(1+LA)C+CBL_1=C(1+BL_1)+LAC$, which is true.

Let us now prove that $\widetilde{M'}M=1+A \widetilde{L}$. This equality is equivalent to $(M'+AC)M=1+A(L+CM)$, which is true since $M'M=1+AL$.

It remains to show $M\widetilde{M'}=1+B \widetilde{L_1}$, or $M(M'+AC)=1+B(L_1+NC)$. Since $MA=BN$ by Lemma~\ref{lem:gn=ma}, this is equivalent to $MM'=1+BL_1$, which is true. The equalities for~$N$ and~$N'$ follow from Lemma~\ref{lem:gn=ma} as well.
\end{proof}

Another useful idea is what we call the {\em dual} of a Darboux transformation.

\begin{Definition}\label{dual definition}
If $(M,N) \colon L \rightarrow L_1$ is a Darboux transformation, its {\em dual} is the Darboux transformation $(L,L_1) \colon M \rightarrow N$.
\end{Definition}

This def\/ines a Darboux transformation. If $(M,N) \colon L \rightarrow L_1$ is a DT, then $NL = L_1 M$ and $\sigma(L) = \sigma(L_1)$. Thus $L_1 M = NL$, and $\sigma(M) = \sigma(N)$, and $(L,L_1) \colon M \rightarrow N$ is also a DT.

Note that this is not a duality in the sense of category theory; our dual is quite unusual in the sense that it switches objects and morphisms.

This duality gives a useful perspective on the category of DTs. For example, we have that DTs are related by a shift if and only if their duals are equivalent. To see this, consider the morphism $(M,N)$ from $L$ to $L_1$, and its shift where $(M,N)$ is a~morphism from $L+AM$ to $L_1+NA$. Taking duals gives us two morphisms from $M$ to $N$, one given by the pair $(L,L_1)$ and the other by the pair $(L+AM,L_1+NA)$, which the reader may verify are equivalent. The proof
in the other direction is similar. More importantly, we have the following lemma.

\begin{Lemma}\label{dual is invertible lemma}
A Darboux transformation is invertible if and only if its dual is.
\end{Lemma}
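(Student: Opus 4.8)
The plan is to prove both implications at once by exhibiting an explicit inverse for the dual and then exploiting the fact that the dual operation is an involution. First I would record the structural observation that dualizing twice returns the original: the dual of $(M,N)\colon L\to L_1$ is $(L,L_1)\colon M\to N$, and the dual of \emph{that} is again $(M,N)\colon L\to L_1$. Consequently it suffices to prove a single implication, say that invertibility of $(M,N)$ forces invertibility of its dual; the converse then follows by applying this implication to the dual and using the involution.

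For the forward direction, suppose $(M,N)\colon L\to L_1$ is invertible with inverse $(M',N')\colon L_1\to L$, so that \eqref{inv:prop1}--\eqref{inv:nl1} hold for some $A,B\in\K$, and recall from Lemma~\ref{lem:gn=ma} that then $BN=MA$. My candidate for an inverse of the dual $(L,L_1)\colon M\to N$ is the pair $(-A,-B)\colon N\to M$. The first step is to check that this is even a Darboux transformation from $N$ to $M$: its intertwining relation reads $(-B)N=M(-A)$, which is exactly $BN=MA$, and $\sigma(N)=\sigma(M)$ holds since $\sigma(M)=\sigma(N)$.

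It then remains to verify that the two compositions of $(L,L_1)$ with $(-A,-B)$ are equivalent to identity morphisms. Computing $(L,L_1)\circ(-A,-B)=(-AL,-BL_1)\colon M\to M$, I would show this is $\sim 1_M$ by taking the witness $\tilde A=-M'$: indeed \eqref{inv:prop1} gives $-AL=1-M'M=1+(-M')M$, while \eqref{inv:prop2} gives $-BL_1=1-MM'=1+M(-M')$, so the same $\tilde A$ handles both components. Symmetrically, $(-A,-B)\circ(L,L_1)=(-LA,-L_1B)\colon N\to N$ is $\sim 1_N$ with witness $\tilde B=-N'$, now reading off \eqref{inv:nl} and \eqref{inv:nl1}. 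This establishes that the dual is invertible, and the involution remark then closes the converse immediately.

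The computations are short, so the only real obstacle is bookkeeping rather than mathematics: one must keep straight which operators play the role of objects and which play the role of morphisms under the dual, get the composition order and the signs right (the minus signs in $(-A,-B)$ are what convert the summands $AL$, $BL_1$ of \eqref{inv:prop1}--\eqref{inv:nl1} into the required $1+\tilde A M$, $1+M\tilde A$ form), and remember that the relation $BN=MA$ furnished by Lemma~\ref{lem:gn=ma} is precisely what certifies $(-A,-B)$ as a legitimate morphism $N\to M$. Once the candidate $(-A,-B)$ is identified, the entire verification is just a rereading of \eqref{inv:prop1}--\eqref{inv:nl1}.
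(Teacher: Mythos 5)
Your proof is correct and follows essentially the same route as the paper's: both reduce to one implication via the involution property, take $(-A,-B)$ as the inverse of the dual, use $BN=MA$ from Lemma~\ref{lem:gn=ma} to certify the intertwining relation, and use $-M'$ and $-N'$ as the witnesses for equivalence to the identity. The only cosmetic difference is that you verify all four identities \eqref{inv:prop1}--\eqref{inv:nl1} directly, whereas the paper checks only the $M$-equalities and lets Lemma~\ref{lem:gn=ma} supply the $N$-equalities.
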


\begin{proof}
Since the dual of the dual of a DT is the original transformation, it suf\/f\/ices to show that the dual of an invertible DT is invertible. So let $(M,N) \colon L \rightarrow L_1$ be an invertible DT. This gives us a DT $(M',N') \colon L_1 \rightarrow L$ with $M'M = 1 + AL$ and $M M' = 1 + B L_1$ for some
operators $A$ and $B$. Lemma~\ref{lem:gn=ma} gives us $N' N = 1 + LA$ and $BN = MA$. Now we must show that $(L,L_1) \colon M \rightarrow N$
is invertible. That is, we must show that $(\widetilde{M},\widetilde{N}) \colon \widetilde{L} \rightarrow \widetilde{L_1}$ is invertible, where
we have $\widetilde{M} = L$, $\widetilde{N} = L_1$, $\widetilde{L} = M$ and $\widetilde{L}_1 = N$. So we need a DT $(\widetilde{M'},\widetilde{N'}) \colon \widetilde{L_1} \rightarrow \widetilde{L}$, with $\widetilde{M'} \widetilde{M} = 1 + \widetilde{A} \widetilde{L}$ and $\widetilde{M} \widetilde{M'} = 1 + \widetilde{B} \widetilde{L_1}$ for some operators $\widetilde{A}$ and $\widetilde{B}$.

Now we take $\widetilde{M'} = -A$, $\widetilde{N'} = -B$, $\widetilde{A} = -M'$ and $\widetilde{B} = -N'$. We have $\widetilde{N'} \widetilde{L_1} = -B N = M (-A) = \widetilde{L} \widetilde{M'}$, so $(\widetilde{M'},\widetilde{N'}) \colon \widetilde{L_1} \rightarrow \widetilde{L}$ is a DT. Also
$\widetilde{M'} \widetilde{M} = -A L = 1 + (-M')M = 1 + \widetilde{A} \widetilde{L}$, and $\widetilde{M} \widetilde{M'} = L (-A) = 1 + (-N')N = 1 + \widetilde{B} \widetilde{L_1}$, as required.
\end{proof}

There are several known types of Darboux transformations.

{\bf 1.~Darboux transformations of Wronskian type.} These are the classical DTs. The opera\-tor~$M$ is given by
\begin{gather*}
M(f) = \frac{W(f_1, f_2, \dots , f_m,f)}{W(f_1, f_2, \dots , f_m)},
\end{gather*}
where $W(f_1, f_2, \dots , f_m)$ denotes a Wronskian determinant with respect to one of the variables $t$,~$\vars$ of $m$ linearly independent $f_j \in \Ko$, which are elements of $\ker L$.

\begin{Definition}\label{Wronskian definition}
We say that a DT $(M,N) \colon L \rightarrow L_1$ is of {\em Wronskian type} when $M$ can be put in the above form by a change of variables, and we say that the DT is a {\em multiple of Wronskian type} if $M$ is the result of multiplying an operator of the above form on the left by a function in~$K$.
\end{Definition}

Wronskian type Darboux transformations are proved to be admitted by several dif\/ferent types of  operators~\cite{2015:super,nimmo2010, Matveev79,matveev:1991:darboux}, and direct applications of these transformations to solve famous nonlinear equations are well known. Wronskian type transformations are never invertible, since $\ker L \cap \ker M \neq \{0\}$. (Wronskian type DT formulas in abstract frameworks were introduced in~\cite{etingof-gelfand-retakh:1997} and~\cite{nimmo2010}. An analog of Wronskian DTs for super Sturm--Liouvlle operators was discovered in~\cite{liu95_initial,liu_manas97,liu_manas97_2}.)

Note that in the 1D case with intertwining relation having $M=N$, as in \eqref{intertw1}, a similar Wronskian construction is used (it has the same formula), but the~$f_i$ are arbitrary eigenfunctions with not necessarily zero eigenvalues. For the 1D case, every DT is of Wronskian type in this extended sense,
see~\cite{adler-marikhin-shabat:2001}.

{\bf 2.~Darboux transformations obtained from a factorization.} Suppose $L=CM$ for some $C, M \in \K \setminus K$. Then for any operator $N$ with
$\sigma(N) = \sigma(M)$, there is a~DT
\begin{gather*}
(M,N) \colon \ CM \rightarrow NC ,
\end{gather*} since $N(CM) = (NC)M$. Since $\ker M \subseteq \ker L$, these transformations are never invertible.

A common trick in proofs is to establish using some reasoning that the transformation operator $M$ can be considered in a form where it is ef\/fectively an ordinary dif\/ferential operator while everything else is multidimensional. In this case, if the Darboux transformation is obtained from a factorization with this particular $M$, then it is also of Wronskian type. This uses the fact that every linear ordinary dif\/ferential operator can be expressed by a Wronskian formula.

{\bf 3.~Laplace transformations.} These are another type of Darboux transformations, introduced in~\cite{Darboux2}. They are distinct from the Wronskian type. They are only def\/ined for 2D Schr\"odinger type operators, which have the form
\begin{gather} \label{eq:L}
L = \partial_x\partial_y +a \partial_x + b\partial_y +c ,
\end{gather}
where $a, b, c \in K$. If the {\em Laplace invariant} $k = b_y + ab -c$ is nonzero, then $L$ admits a Darboux transformation with $M = \partial_x + b$ (in ``$x$-direction''). (Explicit formulas for $L_1$ and $N$ are given below.) If the other Laplace invariant, $h = a_x + ab -c$ is nonzero, then $L$ admits a~Darboux transformation with $M = \partial_y + a$ (in ``$y$-direction'').

Laplace transformations are invertible, and are (almost) inverses of each other. This has been mentioned in the literature, e.g., \cite{ts:steklov_etc:00}. Classically, the invertibility of Laplace transformations was understood to mean that they induced isomorphisms of the kernels of the operators in question. Interestingly, it is exactly the equivalence relation on Darboux transformations that makes it possible to understand the invertibility of Laplace transformations in the precise algebraic sense. The following statement appeared in a brief form in~\cite{2013:invertible:darboux}.

\begin{Theorem}\quad
\begin{enumerate}\itemsep=0pt
 \item[$1.$] The composition of two consecutive Laplace transformations applied to $L$, first in $x$ direction, and then in~$y$ direction is equal to the gauge transformation $L \rightarrow L^{1/k}$. If the transformation is first in $x$ direction, and then in~$y$ direction then the composition is equal to the gauge transformation $L \rightarrow L^{1/h}$.
 \item[$2.$]
The inverse for the Laplace transformation $L \rightarrow L_1$ given by the operator $M=\partial_x + b$ is $(M',N')\colon L_1 \rightarrow L$, where
$M'=-k^{-1} ( \partial_y + a)$, $N'=-k^{-1} ( \partial_y + a- k_y k^{-1})$. The inverse for the Laplace transformation $L \rightarrow L_1$ given by the operator $M=\partial_y + a$ is $(M',N')\colon L_1 \rightarrow L$, where $M'=-h^{-1} ( \partial_x + b)$, $N'=-h^{-1} ( \partial_x + b- h_y h^{-1} )$.
\end{enumerate}
\end{Theorem}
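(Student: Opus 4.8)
The plan is to reduce everything to the two first-order factorizations of $L$,
\[
L = (\partial_y + a)M - k = (\partial_x + b)\widetilde{M} - h, \qquad M = \partial_x + b, \quad \widetilde{M} = \partial_y + a,
\]
obtained by expanding $(\partial_y+a)(\partial_x+b)$ and $(\partial_x+b)(\partial_y+a)$ and comparing with the coefficients of $L$; equivalently $(\partial_y+a)M = L+k$ and $(\partial_x+b)\widetilde{M} = L+h$. From the first factorization one reads off the forward data of the $x$-transformation: conjugating gives $N = kMk^{-1} = \partial_x + b - k_x k^{-1}$ (so that $\sigma(N)=\partial_x$), and the target is $L_1 = N(\partial_y+a) - k$, which has $\sigma(L_1)=\partial_x\partial_y$ as required. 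All subsequent steps are manipulations of first-order operators using $k\,\partial_x\,k^{-1} = \partial_x - k_x k^{-1}$ and the analogous rule in $y$.

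For Part 1, I would compute the transformation operator of the composite directly. The $y$-step applied to $L_1$ uses $\partial_y + a_1$, where $a_1$ is the $\partial_x$-coefficient of $L_1 = (\partial_x + b - k_x k^{-1})(\partial_y + a) - k$; a one-line check gives $a_1 = a$. Hence, recalling that the left factor is applied first, the composite transformation operator is $M_{\mathrm{comp}} = (\partial_y+a)(\partial_x+b) = L + k$. By Definition~\ref{equivalence relation definition} with $A = -1$ this is equivalent to the zeroth-order operator $k$. Now the symbol-matching principle finishes the identification: for the equivalent representative with $M$-component equal to $k$, the relation $\sigma(L)=\sigma(L_2)$ forces $\sigma(N\text{-component}) = \sigma(k)$, and since $k$ is a nonzero function this forces the $N$-component to equal $k$ and the intertwining relation to read $kL = L_2 k$, i.e.\ $L_2 = kLk^{-1} = L^{1/k}$. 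Thus the composite is exactly the gauge transformation $(k,k)\colon L \to L^{1/k}$. The opposite order is identical after exchanging the two factorizations: there $M_{\mathrm{comp}} = (\partial_x+b)(\partial_y+a) = L + h \sim h$, giving the gauge $L \to L^{1/h}$.

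For Part 2 I would first verify the inverse intertwining relation $N'L_1 = LM'$, writing the proposed inverse as $M' = -k^{-1}(\partial_y+a)$ and (after the simplification $k(\partial_y+a)k^{-1} = \partial_y+a-k_yk^{-1}$) $N' = -(\partial_y+a)k^{-1}$. Using $k^{-1}L_1 = Mk^{-1}(\partial_y+a) - 1$ and substituting $(\partial_y+a)M = L+k$, both sides collapse to $-Lk^{-1}(\partial_y+a)$, so the relation holds exactly. For invertibility I would invoke Lemma~\ref{lem:gn=ma}, which reduces the task to exhibiting $A,B$ with $M'M = 1+AL$ (eq.~\eqref{inv:prop1}) and $MM' = 1+BL_1$ (eq.~\eqref{inv:prop2}); the $N$-identities then follow automatically. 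Via the factorization, $M'M = -k^{-1}(\partial_y+a)M = -k^{-1}(L+k)$ and $MM' = -Mk^{-1}(\partial_y+a) = -(k^{-1}L_1 + 1)$, so both products reduce to scalar expressions of the shape $\pm(1 + k^{-1}L)$ and $\pm(1 + k^{-1}L_1)$; matching these to the required form fixes $A=B=k^{-1}$. The $y$-direction inverse is handled by the symmetric computation with $h$ in place of $k$.

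The first-order operator algebra itself is routine; the genuine obstacle is the bookkeeping. One must track the non-commuting products and conjugations carefully, and---most delicately---fix the signs and gauge normalizations so that the composite in Part 1 is recognized as \emph{precisely} $L^{1/k}$ (not merely a gauge-equivalent operator) and so that the products $M'M,\,MM'$ in Part 2 land in the exact form $1 + (\,\cdot\,)L$ demanded by \eqref{inv:prop1}--\eqref{inv:prop2}. In particular the overall sign of the inverse pair $(M',N')$ must be reconciled against the normalization chosen for the forward data $(M,N,L_1)$: the computation above produces $M'M = -(1+k^{-1}L)$ and $MM' = -(1+k^{-1}L_1)$, so the sign carried by $(M',N')$ relative to $(M,N)$ is exactly the point I would treat with the most care.
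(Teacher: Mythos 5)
Your Part~1 is correct and follows essentially the same route as the paper: both arguments rest on the identity $(\partial_y+a)(\partial_x+b)=L+k$, observe that the composite transformation operator $M_1M=L+k$ is equivalent to the function $k$ by taking $A=-1$ in Definition~\ref{equivalence relation definition}, and read $kL=L_2k$ as the gauge transformation $L\rightarrow L^{1/k}$. (The paper computes the second component $\widehat{N}=k+L_2$ explicitly rather than deducing it from the symbol condition, but that is cosmetic; your check that $a_1=a$ is also the step the paper relies on.)

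Part~2 contains a genuine gap, and you have in fact located it without resolving it. Your computations are right: with the stated $M'=-k^{-1}(\partial_y+a)$ one gets $M'M=-k^{-1}(L+k)=-\big(1+k^{-1}L\big)$ and $MM'=-\big(1+k^{-1}L_1\big)$. But $-\big(1+k^{-1}L\big)$ cannot be ``matched to the required form'' $1+AL$ of \eqref{inv:prop1} for any operator $A$: the equation $1+AL=-1-k^{-1}L$ would force $(A+k^{-1})L=-2$, which is impossible since $L$ has positive order. So the sentence ``matching these to the required form fixes $A=B=k^{-1}$'' is false as written; with that sign the composition $(M'M,N'N)$ is equivalent to $(-1,-1)$, which induces $\phi\mapsto-\phi$ on kernels and is not equivalent to the identity morphism $(1,1)$. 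The resolution is that the overall sign must be $+$: the correct inverse is $M'=k^{-1}(\partial_y+a)$, $N'=(\partial_y+a)k^{-1}$, which is what the paper's own proof arrives at (it concludes with $\big(\frac{1}{k}(\partial_y+a_1),\,\frac{1}{k}(\cdots)\big)$) and what the general Type~I inverse formula $\big({-}\frac{1}{f}C,\,-C\frac{1}{f}\big)$ yields when applied to $L=(\partial_y+a)M+f$ with $f=-k$; the minus sign in the printed statement is spurious. With the corrected sign your own computation gives $M'M=1+k^{-1}L$ and $MM'=1+k^{-1}L_1$, and Lemma~\ref{lem:gn=ma} then supplies the $N$-identities as you intended. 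A complete proof must make this sign correction explicit rather than leaving it as ``a point to treat with care.''
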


\begin{proof}
Consider the Laplace transformation of $L$ of the form~\eqref{eq:L} in ``$x$-direction'', that is given by the operator $M=\partial_x + b$. The operators $M$ and $L$ in this case completely def\/ine opera\-tors~$L_1$ and~$N$. (This is an instance of our Theorem~\ref{L and M determine theorem}.) Note that expressions for $L_1$ and $N$ are much shorter, when expressed in terms of $k$ (substitute $c = b_y + ab -k$): $N =\partial_x + b-k_x k^{-1}$, $L_1 =\partial_x \partial_y+a_1 \partial_x + b_1 \partial_y+ c_1 =\partial_x \partial_y+a \partial_x + (b - k_x k^{-1}) \partial_y+ ab - a k_x k^{-1} - k - a_x$.

Now consider the Laplace transformation for $L_1$ but in the $y$ direction (which ``returns back''), i.e., given by the operator $M_1=\partial_y + a_1$ (in this case $a_1$ for $L_1$ equals $a$ for $L$). Then the intertwining relation $N_1 L_1 = L_2 M_1$ implies $N_1=\partial_y + a-k_y k^{-1}$, and $L_2=L^{1/k}$.

The composition of these transformations is $(\widehat{M}, \widehat{N})\colon L \rightarrow L^{1/k}$ with $\widehat{M}=M_1M=(\partial_y+a)(\partial_x+b) = \partial_x \partial_y + a\partial_x + b \partial_y +ab + b_y = k+ L$, and $\widehat{N}=N_1 N =k+L^{1/k}=k+L_2$. From here we can readily see how $M_1$ and $N_1$ can be changed to make the composition an identity morphism from~$L$ to~$L$: the inverse Darboux transformation for the Laplace transformation with $M=\dee_x+b$ is $\big(\frac{1}{k}(\dee_y+a_1), \frac{1}{k}(\dee_y-h_{1x} h_1^{-1})\big)\colon L_1 \rightarrow L$.
\end{proof}

(Note that the formulas for the inverse for Laplace transformations can be generalized, as we do below, for Darboux transformations of Type~I, which is a~generalization of Laplace transformations to operators of a more general form.)

{\bf 4.~Intertwining Laplace transformations.} These were introduced in~\cite{ganzha2013intertwining}, and generalize Laplace transformations to linear partial dif\/ferential operators $L \in \K$ of very general form. One starts with any representation $L=X_1X_2-H$, where $L, X_1, X_2, H \in \K$. Then it was proved that there is a Darboux transformation for the operator~$L$,
\begin{gather*}
(X_2,X_2+\omega)\colon \  L \rightarrow L_1 ,
\end{gather*}
where $L_1=X_2X_1+\omega X_1 - H$, and $\omega= -[X_2,H] H^{-1}$. The latter is a pseudodif\/ferential operator in the general case, an element of the skew Ore f\/ield over $\Ko$ that extends~$\K$. In~\cite{ganzha2013intertwining}, E.~Ganzha then adds the requirement that $\omega \in \K$. This is a very general class of transformations and can be used for theoretical investigations. The class contains both invertible and non-invertible Darboux transformations.

{\bf 5.~Darboux transformations of Type~I.} These were introduced by the second author in~\cite{2015:inv:charts}, and are admitted by operators in $\K$ that can be written in the form $L=CM+f$, where $C,M \in \K$, and $f \in \Ko$. We have
\begin{gather*}
\big(M, M^{1/f}\big)\colon \  L \rightarrow L_1 ,
\end{gather*}
where $L_1=M^{1/f}C+f$, writing $M^{1/f}$ for $f M (1/f)$. The inverse Darboux transformation always exists and is as follows:
\begin{gather*}
\left(-\frac{1}{f}  C, -  C \frac{1}{f}\right)\colon \ L_1 \rightarrow L .
\end{gather*}
The original theory of Laplace transformations is very naturally formulated in terms of dif\/fe\-ren\-tial invariants. In~\cite{2015:inv:charts}, analogous ideas were developed for Darboux transformations of Type~I for operators of third order in two independent variables. This can also be done for operators of a~general form using regularized moving frames~\cite{FO1,FO2} and ideas from~\cite{movingframes}. The classical Laplace transformations are special case of transformations of Type~I. We will see that Darboux transformations of Type~I can be identif\/ied with a subclass of the Intertwining Laplace transformations def\/ined above.

Note that the composition of two Darboux transformations of Type~I is (in general) not of Type~I.

Note also that in the Physics literature there are examples of Darboux transformations for concrete dif\/ferential operators such as the non-stationary Schr\"odinger operator or the Fokker--Planck operator, see~\cite{Ioffe_Junker99}.

\section[Operators of general form admitting Wronskian type Darboux transformations]{Operators of general form admitting Wronskian type\\ Darboux transformations}\label{sec:wronski}

Several dif\/ferent types of operators have been proved to admit Wronskian type Darboux transformations. From classical results, it is known that arbitrary 1D operators and 2D Schr\"odinger operator admit Wronskian type Darboux transformations. Also operators in two independent variables of the form $L=\partial_t - a_i \partial_x^i$, for $i=1, \dots, n$ (the Einstein summation convention is used) admit Wronskian type Darboux transformations with $M$ of the form $M=\dee_x+m$~\cite{Matveev79}. Analogues of Wronskian type Darboux transformations are admitted by super Sturm--Liouville operators~\cite{liu95_initial,liu_manas97,liu_manas97_2}, and, as was recently proved, non-degenerate operators on the superline~\cite{2015:super,shemya:voronov2016:berezinians}.

However, there is an abundance of examples where a Wronskian type operator based on a~element of $\ker L$ does not def\/ine a Darboux transformation.
\begin{Example}
For the following operator
\begin{gather*}
L= \partial_t + \partial_x^2 + t \partial_x - \frac{1}{t} ,
\end{gather*}
and the element of its kernel $\psi=t$, $M=\dee_t -\psi_t \psi^{-1}$ does not generate a Darboux transformation. This mean that it is not possible to f\/ind $L_1=\partial_t + a_1 \partial_x^2 + a_2 \partial_x + a_3$ and $N=\partial_t+n$ such that $NL=L_1M$. Thus we ask the following:

\textit{Given an arbitrary operator in $\K$ with $\psi$ in his kernel, when does it admit a~Wronskian type Darboux transformation with $M=\partial_t-\psi_t \psi^{-1}$?}
\end{Example}

Let us take $t$ a f\/ixed but arbitrary variable. We do not assume a priori any special dependence on $\partial_t$. The idea is that we shall start by treating operators $L$ in $\K$ as ordinary dif\/ferential operators with respect to $t$ with coef\/f\/icients in the ring $K[\dee_{x_1}, \dots, \dee_{x_n}]$. Now $K[\dee_{x_1}, \dots, \dee_{x_n}]$ is embedded in $\K$, so we may say that elements of $K[\dee_{x_1}, \dots, \dee_{x_n}]$ are the {\em $t$-free} operators in $\K$.

\begin{Definition}
For a given variable $t$, call a dif\/ferential operator {\em $t$-free} if it does not contain~$\dee_t$ and none of its coef\/f\/icients depend on~$t$.
\end{Definition}

\begin{Theorem}\label{t-free theorem}
Given an operator $L \in K[\dee_t,\dee_{x_1}, \dots, \dee_{x_n}]$, let $M = \dee_t$. Then we have the DT $(M,N) \colon L \rightarrow L_1$ for some operator $L_1$ and a first-order operator $N$ if and only if there exist some $A,B \in K[\dee_t,\dee_{x_1}, \dots, \dee_{x_n}]$ and $c \in K$, $c \neq 0$, such that
\begin{gather*}
L = A\dee_t + cB ,
\end{gather*}
where $B$ is $t$-free. If the Darboux transformation exists, then $L_1$ and $N$ are given by the formulas
\begin{gather*}
L_1 = L - A \dee_t + N A = NA +cB , \\
N = M^{1/c},   \qquad \text{where $M^{1/c}$ is $c M (1/c)$.}
\end{gather*}
Here either $B = 0$, or we may take any non-zero coefficient of our choice in $B$ to be~$1$ by a~suitable choice of~$c$.
\end{Theorem}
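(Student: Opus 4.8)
The plan is to reduce the existence of the Darboux transformation to a right-divisibility condition and then solve it coefficient by coefficient. Since a Darboux transformation requires $\sigma(L)=\sigma(L_1)$, and this forces $\sigma(N)=\sigma(M)=\sigma(\dee_t)$, any first-order $N$ must have the form $N=\dee_t+n$ with $n\in K$. With $M=\dee_t$, the intertwining relation $NL=L_1M$ reads $NL=L_1\dee_t$, so the transformation exists precisely when $(\dee_t+n)L$ lies in the right ideal $\K\dee_t$ for some $n\in K$. I would first record the elementary fact that every $P\in\K$ has a unique right division $P=P'\dee_t+R$ with $R\in K[\dee_{x_1},\dots,\dee_{x_n}]$ containing no $\dee_t$ (collect all $\dee_t$ to the right using $\dee_t\dee_{x_i}=\dee_{x_i}\dee_t$, then peel one off); consequently $P\in\K\dee_t$ iff this remainder $R$ vanishes. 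Note $R$ has no $\dee_t$ but its coefficients may still depend on $t$, so it need not be ``$t$-free'' in the sense of the paper.

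Next I would apply the division to $L$ itself, writing $L=A\dee_t+R$; this $A$ is exactly the operator in the statement, and $R$ will turn out to be $cB$. The task is to compute the remainder of $(\dee_t+n)L$. Writing $R=\sum_{\vec\alpha}g_{\vec\alpha}\,\dee_{x_1}^{\alpha_1}\cdots\dee_{x_n}^{\alpha_n}$ with $g_{\vec\alpha}\in K$, a short computation using $\dee_t\,g=g\,\dee_t+g_t$ shows that, modulo $\K\dee_t$, the operators $\dee_t L$ and $nL$ leave remainders $R_t:=\sum_{\vec\alpha}(g_{\vec\alpha})_t\,\dee_{x_1}^{\alpha_1}\cdots\dee_{x_n}^{\alpha_n}$ and $nR$ respectively, every surviving $\dee_t$-term being absorbed into $\K\dee_t$. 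Since the remainder map is additive, the Darboux transformation exists iff there is some $n\in K$ with
\begin{gather*}
R_t+nR=0,
\end{gather*}
which, comparing coefficients of each monomial $\dee_{x_1}^{\alpha_1}\cdots\dee_{x_n}^{\alpha_n}$, is the system $(g_{\vec\alpha})_t+n\,g_{\vec\alpha}=0$.

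I would then solve this system. If $R=0$ it is vacuous and $n$ is arbitrary; this is the case $B=0$, where $L=A\dee_t$. If $R\neq 0$, then for every index with $g_{\vec\alpha}\neq 0$ we must have $n=-(g_{\vec\alpha})_t/g_{\vec\alpha}$, so all nonzero coefficients share one common logarithmic $t$-derivative. The key step is that equal logarithmic derivatives force proportionality by a $t$-free factor: for two nonzero coefficients $(g_{\vec\alpha}/g_{\vec\beta})_t=0$, so each ratio lies in $\ker\dee_t$, i.e.\ is $t$-free. Fixing one nonzero coefficient $c$, every $g_{\vec\alpha}=c\,b_{\vec\alpha}$ with $b_{\vec\alpha}$ $t$-free, whence $R=cB$ with $B=\sum b_{\vec\alpha}\,\dee_{x_1}^{\alpha_1}\cdots\dee_{x_n}^{\alpha_n}$ $t$-free, and $n=-c_t/c$. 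Since $c\,\dee_t(1/c)=\dee_t-c_t/c$, this says precisely $N=M^{1/c}$. Combining with $L=A\dee_t+R$ gives $L=A\dee_t+cB$, establishing the ``only if'' direction; the rescaling $c\mapsto\lambda c$, $B\mapsto\lambda^{-1}B$ by a $t$-free $\lambda$ is the stated freedom to normalize a chosen coefficient of $B$ to $1$.

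For the converse and the formulas, I would start from $L=A\dee_t+cB$ with $B$ $t$-free, set $N=M^{1/c}=\dee_t-c_t/c$, and verify directly that $N(cB)=cB\,\dee_t$ (using that the coefficients of $B$ are $t$-free, so $\dee_t(cB)=c_tB+cB\,\dee_t$). Then $NL=NA\,\dee_t+N(cB)=(NA+cB)\dee_t$, so $L_1:=NA+cB=L-A\dee_t+NA$ satisfies $NL=L_1\dee_t=L_1M$, and $\sigma(L_1)=\sigma(L)$ follows from $\sigma(N)=\sigma(M)$ by the remark after Definition~\ref{def:darb}. The main obstacle is the middle step: converting the simultaneous scalar equations into the single clean factorization $R=cB$ with $B$ genuinely $t$-free. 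This rests on the proportionality lemma together with correctly tracking which terms of $(\dee_t+n)L$ fall into $\K\dee_t$ in the non-commutative bookkeeping; once that is in place, the remaining verifications are routine.
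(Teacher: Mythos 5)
Your proposal is correct and follows essentially the same route as the paper's proof: right division of $L$ by $\dee_t$ to isolate the remainder, reduction of the intertwining relation to the coefficient equations $(g_{\vec\alpha})_t+n\,g_{\vec\alpha}=0$, and the observation that equal logarithmic $t$-derivatives force the remainder to factor as $cB$ with $B$ $t$-free. The only difference is presentational: the paper packages the reduction as a shift of the Darboux transformation to $(M,N)\colon L'\rightarrow L_1'$, whereas you work directly with remainders modulo the right ideal $\K\dee_t$ --- the underlying computation is identical.
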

\begin{proof}
Let $L$ be given, and assume $N L=L_1 M$ for some $N, L_1 \in \K$, and $M = \dee_t$. Since $\sigma(L) = \sigma(L_1)$, we have $\sigma(M) = \sigma(N)$ so $N=\dee_t + n$, for some $n \in K$.

We shall now construct a certain shift of this Darboux transformation. By division with remainder in $K[\dee_{x_1}, \dots, \dee_{x_n}][\dee_t]$, we can choose $A \in \K$ such that $L'=L - A \partial_t$ does not contain~$\dee_t$. If $L' = 0$, then $L = A \dee_t = AM$ and the conclusion easily follows.
Henceforth assume $L' \neq 0$. We write
\begin{gather*}
L'= \ci \dee_{x_1}^{p_1} \dee_{x_2}^{p_2} \cdots \dee_{x_n}^{p_n} , \qquad p_1, p_2, \dots ,p_n \geq 0 , \qquad \ci \in K ,
\end{gather*}
where we assume summation in upper and lower indices. We also write $L_1'=L_1 - N A \neq 0$. We then get the Darboux transformation
\begin{gather*}
(M,N)\colon \ L' \rightarrow L_1' ,
\end{gather*}
since $NL' = NL -NA\dee_t = L_1 M - NAM = (L_1 - NA)M$. Now, in $N L' = L_1' \partial_t$ every term of $N L'$ must be a left multiple of $\dee_t$, and all the terms that do not contain $\dee_t$ must have their coef\/f\/icients zero. So,
\begin{gather*}
( \dee_t + n )[L'] = 0 ,
\end{gather*}
where the square brackets mean application of $\dee_t + n $ to all the coef\/f\/icients of~$L'$. So, for each term $\ci \dee_{x_1}^{p_1} \dee_{x_2}^{p_2} \cdots\dee_{x_n}^{p_n}$ in $L'$ we must have $(\dee_t + n)[\ci] = 0$. Writing $c$ for $\ci$, we have $c_{t} + n c = 0$ or $-n = c_t/c$. Thus $c_{t}/c = \dee_t(\ln(c))$ is the same for all of the $\ci$, so all the~$\ln(c)$ dif\/fer by functions that do not depend on~$t$, and all the $c$ are equal to within multiplication by functions which do not depend on~$t$. We may also take some non-zero coef\/f\/icient in~$B$ of our choice to be $1$, and write all the~$\ci$ in the form~$c b$ with $b$ $t$-free, using a common function~$c$ that satisf\/ies $c_t/c = -n$, hence
\begin{gather*}
N = \dee_t - c_t/c ,
\end{gather*}
and $L'$ is of the form
\begin{gather*}
L ' = c B ,
\end{gather*}
where $B$ is $t$-free.

We have $N L' = L_1' \partial_t$, and calculate $N L' = (\dee_t - c_t/c)c B = \dee_t c B - c_t B = c \dee_t B + c_t B - c_t B = c \dee_t B = cB \dee_t$, since $B$ is $t$-free. Thus, $L_1'\dee_t = cB \dee_t$, and cancelling $\dee_t$ gives us $L_1' = cB$. So
\begin{gather*}
L_1' = cB = L' .
\end{gather*}
Thus, $L = L' + A \partial_t=A \partial_t+cB$. Also $L_1=L_1' + N A= cB+NA$.

The other direction is an easy calculation.
\end{proof}

This theorem also gives us a similar result for general transformations of Wronskian type.

\begin{Theorem}[criterion]\label{thm:wronski_type_char}
Given an operator $L \in \K$ and a $\psi \in \ker L$, the operator $M = \partial_t-\psi_t \psi^{-1}$ generates a DT if and only if there exists some $A \in \K$, and a $t$-free $B \in \K$ such that
\begin{gather*}
L^{\psi} = A \partial_t + c B .
\end{gather*}
Here either $B = 0$, or any one non-zero coefficient in $B$ may be taken to be $1$. If the DT exists, then $L_1$ and $N$ are given by $L_1^{\psi} = L^{\psi} - A \partial_t +N^{\psi}A$, $N^\psi = \partial_t^{1/{c}}$, where $\dee_t^{1/c}$ is $c \dee_t (1/c)$.
\end{Theorem}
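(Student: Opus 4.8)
The plan is to reduce the statement to the already-proved Theorem~\ref{t-free theorem} by a single gauge transformation. The crucial observation is that the operator $M = \partial_t - \psi_t\psi^{-1}$ is nothing but a gauge conjugate of the plain derivative $\partial_t$: for any function $f$ one checks $\psi\,\partial_t\big(\psi^{-1}f\big) = f_t - \psi_t\psi^{-1}f$, so that $M = \psi\,\partial_t\,\psi^{-1}$, and hence $M^{\psi} = \psi^{-1}M\psi = \partial_t$. Thus gauging the whole picture by $\psi$ turns the transformation operator $M$ into $\partial_t$, which is exactly the situation covered by Theorem~\ref{t-free theorem}. In the paper's own notation this says $M = \partial_t^{1/\psi}$.

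First I would invoke the fact recalled earlier, that gauge transformations commute with Darboux transformations: $(M,N)\colon L \rightarrow L_1$ is a DT if and only if $\big(M^{\psi},N^{\psi}\big)\colon L^{\psi}\rightarrow L_1^{\psi}$ is a DT. This is a genuine ``if and only if'' because gauging by $\psi^{-1}$ inverts gauging by $\psi$; concretely, conjugating the intertwining relation $NL=L_1M$ by $\psi$ gives $N^{\psi}L^{\psi}=L_1^{\psi}M^{\psi}$, and the reverse conjugation recovers the original. Since $M^{\psi}=\partial_t$, the operator $M$ generates a DT on $L$ precisely when $\partial_t$ generates a DT on $L^{\psi}$. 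Applying Theorem~\ref{t-free theorem} to $L^{\psi}$ with $\partial_t$ in place of $M$ then yields exactly the criterion: $\partial_t$ generates a DT on $L^{\psi}$ iff $L^{\psi}=A\partial_t+cB$ with $B$ $t$-free, and in that case $L_1^{\psi}=L^{\psi}-A\partial_t+N^{\psi}A$ and $N^{\psi}=\partial_t^{1/c}=c\,\partial_t(1/c)$. Gauging these formulas back by $\psi^{-1}$ recovers $L_1$ and $N$ themselves.

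A couple of bookkeeping points need care but are routine. One must check that the first-order requirement on $N$ transfers across the gauge: since $N^{\psi}=\psi^{-1}N\psi$ is conjugation by a function, it preserves both the order and the principal symbol of $N$, so $N$ is first order with $\sigma(N)=\partial_t$ if and only if $N^{\psi}$ is. One should also note that the normalization clause --- either $B=0$, or any one nonzero coefficient of $B$ may be scaled to $1$ by a suitable choice of $c$ --- is inherited verbatim from Theorem~\ref{t-free theorem}, since it is a statement purely about the decomposition $L^{\psi}=A\partial_t+cB$.

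The main, and essentially only, obstacle is identifying the correct gauge and verifying the identity $M=\psi\,\partial_t\,\psi^{-1}$ with the right signs. Once this is in hand, the theorem follows from Theorem~\ref{t-free theorem} by substitution, so the remainder is an application of the gauge invariance of the category of Darboux transformations rather than any fresh computation.
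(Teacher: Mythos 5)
Your proposal is correct and follows essentially the same route as the paper: the paper's proof likewise computes $M^\psi = \psi^{-1}(\partial_t - \psi_t\psi^{-1})\psi = \partial_t$, conjugates the intertwining relation $NL = L_1M$ by $\psi$ to get $N^\psi L^\psi = L_1^\psi \partial_t$, and then applies Theorem~\ref{t-free theorem}. The identity $M = \psi\,\partial_t\,\psi^{-1}$ and the gauge-invariance bookkeeping you spell out are exactly the content of the paper's (briefer) argument.
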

\begin{proof}
Let $L$ be given, together with $\psi \in \ker(L)$, and let $M$ be $\dee_t - \psi_t\psi^{-1}$. Note that $M^\psi = \psi^{-1} (\dee_t -\psi_t \psi^{-1}) \psi =
\psi^{-1} (\psi \dee_t + \psi_t - \psi_t) = \dee_t$, so we conjugate $N L = L_1 M$ by $\psi$. This gives $N^\psi L^\psi = L_1^\psi M^\psi = L_1^\psi \dee_t$. Now apply the previous theorem.
\end{proof}

\begin{Corollary}[necessary condition]\label{cor:1}
Let an operator $L \in \K$, and $\psi \in \ker L$ be given, where $M_\psi = \partial_t-\psi_t \psi^{-1}$ generates a DT. Then there exists some $A \in \K$, and a $t$-free $B \in \K$ such that
\begin{gather*}
L = A M_\psi + c B , \qquad [M_\psi,B]=0 .
\end{gather*}
Here $B$ is not necessarily $t$-free.
\end{Corollary}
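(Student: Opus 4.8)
The plan is to obtain the decomposition of $L$ by conjugating the decomposition of $L^\psi$ furnished by Theorem~\ref{thm:wronski_type_char} back through the gauge by~$\psi$. First I would apply that theorem to produce $\widetilde A \in \K$, a $t$-free $\widetilde B \in \K$, and a function $c \in K$ with
\[
L^\psi = \widetilde A\,\partial_t + c\widetilde B.
\]
Alongside this I would record the identity $M_\psi = \psi\,\partial_t\,\psi^{-1}$, which is merely a rearrangement of $M_\psi^\psi = \partial_t$ as established in the proof of Theorem~\ref{thm:wronski_type_char}.

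Next, since $L = \big(L^\psi\big)^{\psi^{-1}} = \psi\,L^\psi\,\psi^{-1}$ and conjugation by the invertible function $\psi$ is an algebra automorphism of~$\K$, I would distribute:
\[
L = \psi\widetilde A\,\partial_t\,\psi^{-1} + \psi c\widetilde B\,\psi^{-1} = \big(\psi\widetilde A\psi^{-1}\big)\big(\psi\,\partial_t\,\psi^{-1}\big) + \big(\psi c\psi^{-1}\big)\big(\psi\widetilde B\psi^{-1}\big).
\]
Setting $A := \psi\widetilde A\psi^{-1}$ and $B := \psi\widetilde B\psi^{-1}$, and observing that $c$ is fixed by conjugation because it is a function, the identity $M_\psi = \psi\,\partial_t\,\psi^{-1}$ converts this into $L = A M_\psi + cB$, which is exactly the claimed form.

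It then remains to verify the commutation relation. Because $\widetilde B$ is $t$-free it commutes with~$\partial_t$, that is $[\partial_t,\widetilde B] = 0$. Since conjugation preserves commutators,
\[
[M_\psi, B] = \big[\psi\,\partial_t\,\psi^{-1},\, \psi\widetilde B\psi^{-1}\big] = \psi\,[\partial_t,\widetilde B]\,\psi^{-1} = 0.
\]
Note that $B = \psi\widetilde B\psi^{-1}$ is in general no longer $t$-free, since conjugation by $\psi$ (which depends on~$t$) reintroduces $t$-dependence; what survives the conjugation is not $t$-freeness but the vanishing of the commutator with $M_\psi$, which is precisely the content of the corollary.

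I expect the computation to be routine, with no genuine analytic obstacle. The only points requiring care are bookkeeping: keeping the direction of the gauge straight (so that $M_\psi = \psi\,\partial_t\,\psi^{-1}$, not its inverse) and confirming that $A$ and $B$ genuinely lie in $\K$, which holds because conjugation by the nonzero function $\psi$ is an automorphism of $\K$, the invertibility of $\psi$ already being implicit in the definition of $M_\psi$.
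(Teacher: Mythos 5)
Your proposal is correct and follows essentially the same route as the paper: apply Theorem~\ref{thm:wronski_type_char} to get $L^\psi = A\,\partial_t + cB$ with $B$ $t$-free, conjugate back by $\psi$, use $\psi\,\partial_t\,\psi^{-1} = \partial_t - \psi_t\psi^{-1} = M_\psi$, and note that conjugation preserves the vanishing commutator while destroying $t$-freeness. The only cosmetic difference is that the paper phrases the conjugation in its gauge-transformation notation $(\cdot)^{\psi^{-1}}$ and simply renames the conjugated operators as $A$ and $B$ at the end.
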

\begin{proof} From the previous theorem, $L^\psi = A \dee_t + cB$ and $[\dee_t,B]$. Conjugating by $\psi^{-1}$, $L = A^{\psi^{-1}} \dee_t^{\psi^{-1}} + c^{\psi^{-1}} B^{\psi^{-1}}$, with $[\dee_t^{\psi^{-1}},B^{\psi^{-1}} ] = 0$. Note that $\dee_t^{\psi^{-1}} = \psi \dee_t \psi^{-1} =\psi(\psi^{-1} \dee_t - \psi_t \psi^{-2}) = \dee_t - \psi_t \psi^{-1} = M_\psi$, so $L = A^{\psi^{-1}} M_\psi + c B^{\psi^{-1}}$, and $[M_\psi,B^{\psi^{-1}}] = 0$.
Now rename $A^{\psi^{-1}}$ as~$A$, and~$B^{\psi^{-1}}$ as~$B$.
\end{proof}

\section{Classif\/ication of Darboux transformations of f\/irst order}\label{sec:1storder_classification}

We know that Type~I Darboux transformations are never of Wronskian type, because they have $\ker L \cap \ker M = \{0\}$. For f\/irst-order transformations, we can say more.

\begin{Theorem}\label{thm:classification_order_one}
Suppose there is a first-order Darboux transformation $(M,N)\colon L \rightarrow L_1$ with $N,M,L_1 \in \K$, and $M=\partial_t+m$. Then this transformation is either of Type~I or of Wronskian type.
\end{Theorem}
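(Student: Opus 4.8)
The plan is to analyze what it means for a first-order $M = \partial_t + m$ to generate a DT, and to split into cases according to whether $\ker L \cap \ker M$ is trivial or not. The key dichotomy is that a Wronskian type DT has a nonzero seed in $\ker L \cap \ker M$, whereas a Type~I DT has $\ker L \cap \ker M = \{0\}$; so I would show that exactly one of these two situations must occur, and that each forces the corresponding structural form on $L$. First I would normalize the transformation operator: since $M = \partial_t + m$ with $m \in K$, I can write $m = -\psi_t \psi^{-1}$ for a suitable $\psi \in K$ (solving $\psi_t = -m\psi$, using that $K$ contains solutions of the equations we meet), so that $M = \partial_t - \psi_t\psi^{-1} = M_\psi$ in the notation of Corollary~\ref{cor:1}. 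The conjugation identity $M^\psi = \partial_t$ from the proof of Theorem~\ref{thm:wronski_type_char} then lets me replace $L$ by $L^\psi$ and work with $M = \partial_t$, where the structure is controlled by Theorem~\ref{t-free theorem}.

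With $M = \partial_t$ after conjugation, Theorem~\ref{t-free theorem} tells me that the existence of the DT forces $L^\psi = A\partial_t + cB$ with $B$ a $t$-free operator and $c \in K$ nonzero (or $B = 0$). Conjugating back, Corollary~\ref{cor:1} gives $L = AM + cB'$ with $[M, B'] = 0$, where I write $B' = B^{\psi^{-1}}$. This is already the unified algebraic presentation, and the two cases correspond precisely to the nature of $B'$. If $B' = 0$, then $L = AM$ is a left multiple of $M$, so $\ker M \subseteq \ker L$ and the seed $\psi$ lies in both kernels; this is the factorization situation, which (as noted in the discussion of type~2 transformations, since $M$ is an ordinary differential operator in $t$) is of Wronskian type. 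If $B' \neq 0$, I would argue that $cB'$ plays the role of the ``$f$'' term in the Type~I form $L = CM + f$: the hoped-for conclusion is that the DT is of Type~I, with the invertibility coming from $\ker L \cap \ker M = \{0\}$.

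The main obstacle I anticipate is the case $B' \neq 0$, where $B'$ is $t$-free but need not be a zeroth-order operator, so $cB'$ is not literally a function $f \in K$ and the DT does not obviously fit the Type~I template $L = CM + f$ verbatim. I expect the resolution to hinge on showing that, for a \emph{first-order} $M$, the commuting $t$-free part $cB'$ can in fact be absorbed correctly: either $B'$ must reduce to something I can rewrite so that $L = CM + f$ holds for genuine $C \in \K$ and $f \in K$, or else the Type~I mechanism applies with $cB'$ in place of $f$ after checking the relevant conjugation $M^{1/(cB')}$ still makes sense and yields a valid intertwining pair. Establishing $\ker L \cap \ker M = \{0\}$ in this case (equivalently, that no nonzero element of $\ker M$, i.e.\ no multiple of $\psi$, lies in $\ker L$) will be the crux, and I would verify it directly: if $\psi \in \ker L$ then applying $L = AM + cB'$ to $\psi$ and using $M\psi = 0$ together with $[M,B']=0$ would force $cB'\psi = 0$, contradicting $B' \neq 0$ and $\psi \neq 0$, so the two kernels meet only in $\{0\}$ and the transformation is genuinely of Type~I rather than Wronskian type.
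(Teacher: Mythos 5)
Your setup is the same as the paper's: normalize $M=\partial_t+m$ to $v\,\partial_t\,v^{-1}$, conjugate the intertwining relation by $v$, and invoke Theorem~\ref{t-free theorem} to get $L^v=A\partial_t+cB$ with $B$ $t$-free. The case $cB=0$ is handled correctly. But there is a genuine error in your treatment of $cB\neq 0$. The correct trichotomy is three-fold, not two-fold: when $cB$ is a \emph{nonzero function} the transformation is of Type~I (your absorption idea works verbatim there, with $C=vAv^{-1}$ and $f=c$), but when $cB$ is an operator of \emph{positive order} the transformation is again of Wronskian type, not Type~I. Your proposed proof that $\ker L\cap\ker M=\{0\}$ in the latter case is fallacious: from $L[\psi]=A M[\psi]+cB'[\psi]$ and $M[\psi]=0$ you deduce $cB'[\psi]=0$ and call this a contradiction with $B'\neq 0$ and $\psi\neq 0$. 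A nonzero differential operator of positive order annihilates plenty of nonzero functions; that step only yields a contradiction when $cB'$ has order zero. Indeed the opposite of what you claim is true: since $cB$ is a $t$-free operator of positive order, it has a nonzero $t$-free function $\phi$ in its kernel, and then $\psi=v\phi$ satisfies both $L[\psi]=(vA\partial_t v^{-1}+vcBv^{-1})[v\phi]=vA\partial_t[\phi]+vcB[\phi]=0$ and $M[\psi]=0$, because $\partial_t-\psi_t\psi^{-1}=\partial_t-v_tv^{-1}=M$. So $\ker L\cap\ker M\neq\{0\}$, $M$ is exactly the Wronskian operator built from the seed $\psi$, and the transformation is of Wronskian type. (Example~\ref{x dee_x} in the paper exhibits precisely this situation, with $cB$ of order three.)

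In short, the missing idea is that a $t$-free operator of positive order always supplies a $t$-free seed function, which converts the ``leftover'' term into a Wronskian-type seed rather than a Type~I potential. Your dichotomy ``$cB'=0$ versus $cB'\neq 0$'' must be refined to ``$cB'=0$'', ``$cB'$ a nonzero function'', and ``$cB'$ of positive order'', with the first and third both landing in the Wronskian class and only the middle one in Type~I. As written, your argument would prove that Example~\ref{x dee_x} is an invertible Type~I transformation, which it is not.
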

\begin{proof}
While $M = \dee_t + m$ for some $m \in \Ko$, it is more convenient to write~$M$ as $\dee_t^{v^{-1}} = v \dee_t v^{-1} = \dee_t -v_tv^{-1}$, which is easily done by solving $-mu = v_t$.

Then we have $N L = L_1 \dee_t^{v^{-1}}$, and conjugate by $v$ to get $N^v L^v = L_1^v \dee_t$. By Theorem~\ref{t-free theorem}, we have $L^v = A \dee_t + cB$, where $B$ is $t$-free. We have three cases.

If $cB=0$ then $L = A^{v^{-1}} M$ and we have a Darboux transformation obtained from a factorization. Then $v \in \ker L \cap \ker M$, and $M = \dee_t - v_t v^{-1}$, showing the DT is of Wronskian type. Henceforth, assume $cB \neq 0$.

Suppose f\/irst that $cB$ is a non-zero function. Then we may take $cB = c$, since we are free to pick one coef\/f\/icient of~$B$ to be~$1$. Then $L^v = A \dee_t + c$, and conjugation gives $L = v A v^{-1} M + c$. We let $C$ be $v A v^{-1}$ and $f$ be $c$, giving $L = CM + f$. Theorem~\ref{t-free theorem} also gives $N^v = \dee_t - c_t c^{-1} = c \dee_t c^{-1}$, so $N = c v \dee_t v^{-1} c^{-1} = f M f^{-1}$, as expected. Similarly, $L_1^v = L^v - A \partial_t + ( \partial_t -c_{t} c^{-1} ) A = L^v - A \dee_t + N^v A$, yielding $L_1 = L - v A v^{-1} M + N v A v^{-1} = CM + f - CM + NC = NC + f$. This shows the transformation is of Type~I.

We now suppose that $B$ is a $t$-free dif\/ferential operator that is not a function. Then we have a non-zero $t$-free function $\phi$ in $\ker(cB)$. We let $\psi$ be $v \phi$, and claim $\psi$ is in $ \ker L$. We have $L = v A \dee_t v^{-1} + v cB v^{-1}$, so $L[\psi]$ is $(v A \dee_t v^{-1} + v cB v^{-1})[v \phi] = v A \dee_t[\phi] + vcB[\phi] = 0$, because $\phi$ is $t$-free. The Wronskian operator for $\psi$ is $\dee_t - \psi_t \psi^{-1} = \dee_t - (v \phi)_t (v \phi)^{-1} = \dee_t - v_t \phi v^{-1} \phi^{-1} = \dee_t - v_t v^{-1} = M$. This shows the transformation is of Wronskian type.
\end{proof}

We can put f\/irst-order Darboux transformations of all types on a common footing, and show that they depend on the existence of a representation of $L$ in a special form.

\begin{Theorem} \label{thm:commutator}\quad
\begin{enumerate}\itemsep=0pt
\item[$1.$] %\label{part i}
A Darboux transformation $(M,N)\colon L \rightarrow L_1$ where $M$, $N$ are first order exists if and only if $L = CM + cB$ for some operators $C$ and $B$ and some $c \in \Ko$, where $[M,B] = 0$.
\item[$2.$] % \label{part ii}
If $L$, $M$, $C$, $B$ and $c$ are as above, we have the following:
\begin{itemize}\itemsep=0pt
\item If $cB=0$ then $(M,N)$ is a Darboux transformation obtained from a factorization, and is also of Wronskian type.
\item If $cB \in K$ and $cB \neq 0$ then $(M,N)$ is a transformation of Type~I.
\item If $cB \notin K$ $($is an operator of order larger than zero$)$ then $(M,N)$ is a transformation of Wronskian type.
\end{itemize}
The corresponding operators $N$ and $L_1$ for the Darboux transformation are
\begin{gather} \label{eq:formulas:forNL1:1st:order}
N = c M c^{-1} , \qquad L_1=NC+cB , \qquad \text{if} \quad cB \neq 0,
\end{gather}
 and in the case $cB=0$, $N$ is any operator with $\sigma(N) = \sigma(M)$, and $L_1=NC$.
\end{enumerate}
\end{Theorem}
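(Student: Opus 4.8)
The plan is to prove the two parts by reducing everything to the case $M=\partial_t$ already settled in Theorem~\ref{t-free theorem}. I would obtain the backward implication of Part~1 together with the explicit formulas by a single direct computation. Given $L=CM+cB$ with $[M,B]=0$ and $c\in\Ko$, set $N=cMc^{-1}$ and $L_1=NC+cB$. Then
\begin{gather*}
NL=cMc^{-1}(CM+cB)=cMc^{-1}CM+cMB,\qquad L_1M=(cMc^{-1}C+cB)M=cMc^{-1}CM+cBM,
\end{gather*}
so $NL-L_1M=c(MB-BM)=c[M,B]=0$, which is exactly the intertwining relation. Since $M=\partial_t+m$ is first order and $c$ is a function, $N=cMc^{-1}=\partial_t+m-c_tc^{-1}$ is first order with $\sigma(N)=\sigma(M)$, so $(M,N)\colon L\to L_1$ is a genuine first-order Darboux transformation. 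This establishes $(\Leftarrow)$ in Part~1 and formulas~\eqref{eq:formulas:forNL1:1st:order} at once.

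For the forward implication $(\Rightarrow)$ I would argue as in Corollary~\ref{cor:1}. Writing $M=\partial_t+m=v\partial_t v^{-1}$, where $v$ solves $v_t=-mv$, conjugation of $NL=L_1M$ by $v$ turns $M$ into $\partial_t$ and gives $N^vL^v=L_1^v\partial_t$. Theorem~\ref{t-free theorem} then yields $L^v=A\partial_t+cB'$ with $B'$ $t$-free, hence $[\partial_t,B']=0$. Conjugating back by $v^{-1}$ produces $L=A^{v^{-1}}M+c\,(B')^{v^{-1}}$ (the scalar $c$ being fixed by conjugation, and $(\partial_t)^{v^{-1}}=M$), with $[\partial_t,B']=0$ turning into $[M,(B')^{v^{-1}}]=0$. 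Renaming $C=A^{v^{-1}}$ and $B=(B')^{v^{-1}}$ gives the required representation.

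Part~2 I would prove by transporting the three-case analysis from the proof of Theorem~\ref{thm:classification_order_one} through the same conjugation, working with the reduced ($t$-free) remainder supplied by Theorem~\ref{t-free theorem}. If $cB=0$ then $L=CM$ is a factorization, so $(M,N)\colon CM\to NC$ is a Darboux transformation for any $N$ with $\sigma(N)=\sigma(M)$; moreover the seed $v\in\ker M$ lies in $\ker L$ and $M=\partial_t-v_tv^{-1}$, so the transformation is also of Wronskian type. If $cB=f\in\Ko\setminus\{0\}$, then $L=CM+f$ is precisely the Type~I form; since $B=f/c$ is then $t$-free one gets $N=cMc^{-1}=fMf^{-1}=M^{1/f}$, matching the Type~I transformation $(M,M^{1/f})$. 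If $cB\notin\Ko$ has positive order, then in the reduced picture the $t$-free remainder has a nonzero $t$-free $\phi$ in its kernel; setting $\psi=v\phi$ gives $\psi\in\ker L$ with $\psi_t\psi^{-1}=v_tv^{-1}=-m$, so $M=\partial_t-\psi_t\psi^{-1}$ is a Wronskian operator and the transformation is of Wronskian type.

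\textbf{The main obstacle} is that $[M,B]=0$ does \emph{not} force $B$ to be $t$-free after conjugation: it only makes the coefficients of $B^v$ independent of $t$, while $B^v$ may still contain powers of $\partial_t$, which can be absorbed into $C$ and thereby change the apparent order of $cB$. For instance, when $m_t=0$ the choice $B=\partial_t$ satisfies $[M,B]=0$, yet $L=CM+c\partial_t=(C+c)M-cm$ is really a Type~I representation with scalar remainder. Hence the trichotomy in Part~2 must be read off from the canonical reduced remainder of Theorem~\ref{t-free theorem}, not from an arbitrary $B$ with $[M,B]=0$, and I would state this reduction explicitly. It is reassuring that the formulas $N=cMc^{-1}$ and $L_1=NC+cB$ nonetheless agree across representations, consistent with the fact that $L$ and $M$ determine $N$ and $L_1$ (Theorem~\ref{L and M determine theorem}).
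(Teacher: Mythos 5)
Your proposal is correct and follows essentially the same route as the paper: the backward direction and formulas~\eqref{eq:formulas:forNL1:1st:order} via the direct computation $NL-L_1M=c[M,B]$, the forward direction by gauging $M$ to $\partial_t$ and invoking Theorem~\ref{t-free theorem}, and the trichotomy by transporting the three cases of Theorem~\ref{thm:classification_order_one} through the conjugation. Your caveat about representation-dependence is well taken and sharper than the paper's treatment: the paper's proof of part~(2) asserts that $C$ and $cB$ are uniquely determined by $L$ and $M$, but that uniqueness holds only for the canonical decomposition in which $B$ becomes $u$-free after conjugation (your $B=\partial_t$ example shows an arbitrary $B$ with $[M,B]=0$ need not be of this form, since such a $B$ may still contain powers of $\partial_t$ absorbable into $C$), so the trichotomy must indeed be read off the reduced remainder as you propose.
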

\begin{proof}
For (1), we f\/irst assume that $L=CM + cB$, where $MB = BM$. If $cB = 0$, then $L = CM$, and taking any $N$ with $\sigma(N) = \sigma(M)$, and $L_1 = NC$, we have $NL = NCM = L_1M$. Now assuming $cB \neq 0$, we let $N = cMc^{-1}$, and $L_1 = NC + cB$. This gives us $NL = cMc^{-1}(CM + cB)
= cMc^{-1}CM + cMB = NCM +cBM = (NC+cB)M = L_1 M$. Since $\sigma(M) = \sigma(N)$, we get $\sigma(L) = \sigma(L_1)$ showing that this is a Darboux transformation.

Now let us prove the statement in the opposite direction and assume $(M,N) \colon L \rightarrow L_1$ is a DT with $M$ and $N$ f\/irst order. As before we may change variables and apply gauge transformations to make $g^{-1} M g =\dee_u$. Conjugating $NL = L_1 M$, we get
$N^g L^g = L_1^g \dee_u$. By Theorem~\ref{t-free theorem}, this gives $L^g = C' \dee_u + c B'$, where $B'$ is $u$-free. Thus $[\dee_u,B'] = 0$, which implies $[M,gB'g^{-1}] = 0$. Then $L^g = C' \dee_u + c B'$ becomes $L = g C' g^{-1} M + c g B' g^{-1}$, which we write as $L = C M + c B$ by
taking $C = g C' g^{-1}$ and $B = g B' g^{-1}$.

For (2), note that when we changed variables and applied a gauge transformation in the proof of (1) to make $g^{-1} M g =\dee_u$, that the operator $\dee_u$ we obtained was unique. Then the $C'$ and $cB'$ we obtained were unique, and thus the $C$ and $cB$ in $L = CM + cB$ are uniquely determined.

In our f\/irst case, we assume $cB \notin K$. As before, we change variables and apply a gauge transformation to get $L^g = C'\dee_u + cB'$, where $cB' \notin K$. As in the proof of Theorem~\ref{thm:classification_order_one}, we have a non-zero $u$-free function $\phi$ in $\ker(cB')$, since $B'$ is $u$-free. We now let $\psi$ be $g \phi$, and have that~$\psi$ is in~$ \ker L$. The Wronskian operator for $\psi$ and $\dee_u$ is then $\dee_u - \psi_u \psi^{-1} = \dee_u - (g_u \phi)(g \phi)^{-1} = \dee_u - g_u g^{-1} = g \dee_u g^{-1} = M$, so $M$ is obtained from a Wronskian operator by a change of variables, and the transformation is of Wronskian type by Def\/inition~\ref{Wronskian definition}.

Next assume $cB = 0$, so $L = CM$. Taking any $N$ with $\sigma(N) = \sigma(M)$, we let $L_1$ be $NC$ and have $NL = NCM = L_1M$, making the DT one obtained from a factorization. The same DT is also of Wronskian type. We change variables and transform, getting $L^g = C'\dee_u$, where $M = g \dee_u g^{-1}$.
Then $1$ is in the kernels of $L^g$ and $\dee_u$, and $g$ is in the kernels of $L$ and $M$. As in the previous paragraph, $\dee_u - g_u g^{-1} = g \dee_u g^{-1} = M$, showing the transformation is of Wronskian type.

Finally, assume $cB = f$ is a non-zero function in $K$. Then $L = CM + f$. If $N = f M f^{-1}$, then $NL = NCM + Nf = NCM + fM = (NC + f)M$ and the DT is of Type~I.

We must also show that when $cB \neq 0$, that $N$ must be $cMc^{-1}$. To see this, we have $L^g = C'\dee_u + cB'$, as before. Considering $(\dee_u,N') \colon L^g \rightarrow L_1'$, we get from Theorem~\ref{t-free theorem} that $N'$ must be $c \dee_u c^{-1}$. Transforming back to the DT $(M,N) \colon L \rightarrow L_1$, $N = g N' g^{-1} = g c \dee_u c^{-1} g^{-1} = c g \dee_u g^{-1} c^{-1} = c M c^{-1}$, as desired. \end{proof}

\begin{Remark}
As Type~I Darboux transformations do, the transformations of Wronskian type of order one f\/it the framework developed by E.I.~Ganzha in~\cite{ganzha2013intertwining}. Given $L = CM + cB$ with $[M,B] = 0$, we initially assume $cB \neq 0$, and write $L = X_1 X_2 - H$ by taking $X_1 = C$, $X_2 = M$ and $H = -cB$. We need $\omega = -[X_2,H] H^{-1}$ to be a strictly dif\/ferential operator, and calculate
$\omega = -[M,-cB] (-cB)^{-1} = (c B M - M c B) B^{-1} c^{-1} = (c M B - M c B) B^{-1} c^{-1} = (c M - M c) c^{-1} = c M c^{-1} - M = N - M$. This also gives us that $N$ is $M + (N - M) = X_2 + \omega$, as required.

In case $cB = 0$, then $L = CM = (C-1)M + M$, and we take $X_1 = C-1$, $X_2 = H = M$, which works.
\end{Remark}

\begin{Remark}
Let us call an expression of $L$ as $L = CM + cB$, with $[M,B]=0$ and $c \in \Ko$ a~{\em quasi-factorization} of~$L$. A natural question is which operators $L$ have quasi-factorizations, and if they do, how many do they have? We do not have uniqueness of factorization for partial dif\/ferential operators, as, e.g., shown by Landau's example (mentioned in~\cite{Blumberg} as well as in~\cite{tsarev:2009:inbook} and our Example~\ref{from_Landau}). Thus the general question of how many quasi-factorizations an operator~$L$ has may be dif\/f\/icult. We may however note the following for Darboux transformations of Type~I, where $cB = c$. For the quasi-factorization $L = CM + c$ where~$C$ and~$M$ are not functions, we have that the principal symbols of $C$ and $M$ are factors of the principal symbol of $L$. If $M$ is f\/irst order, it is almost determined by its principal symbol. Given a f\/irst-order~$M$, we may change variables if need be, and write~$M$ as $a \dee_t + b$. Grouping $a$ with~$C$, we may assume that the principal symbol of $M$ is $p_t = \sigma(\dee_t)$. A quasi-factorization of~$L$ is determined by~$M$, so it is natural to ask if there can be more than one f\/irst-order quasi-factor~$M$ of~$L$ with the same principal symbol~$p_t$.

For Darboux transformations of Type~I, the answer is no, unless the principal symbol of $L$ has a repeated factor of $p_t$. For suppose we have two f\/irst-order quasi-factors $M_1$ and $M_2$ of $L$, that both have principal symbol~$p_t$. Applying an appropriate gauge transformation, we may assume $M_1 = \dee_t$ and $M_2 = \dee_t + n$, where $n \neq 0$. Then $L = C \dee_t + f = E(\dee_t + n) + g$, where $f$ and $g$ are in $\Ko$. Rearranging, $En = (C-E)\dee_t + f - g$. Thus $p_t$ divides the principal symbol of $En$, and hence of~$E$. Thus $p_t^2$ divides the principal symbol of~$L$. This gives us that when the principal symbol of $L$ has no repeated factors, that there is at most one f\/irst-order Type~I transformation per factor, and thus at most $\deg(L)$ many f\/irst-order Type~I Darboux transformations of~$L$.
\end{Remark}

In the case of Darboux transformations which are not of Type~I, the situation is not as simple, since the principal symbol of $M$ does not necessarily divide that of~$L$. See the following example.

\begin{Example}\label{x dee_x}
We let $L$ be
\begin{gather*}
\dee_{xxy} + \dee_{xyy} + (1-x/2)\dee_{xx} + (3-x)/2 \dee_{xy} \\
\qquad{} + \big({-}1/x+1/2x^2\big)\dee_{yy} + 1/2\dee_x + \big({-}1/x+1/x^2\big)\dee_y.
\end{gather*}
Letting
\begin{gather*}
M = x \dee_x + \dee_y
\end{gather*}
we have $L = CM + cB$, where $BM = MB$. Here $c = x(x-1)/(8e^{3y})$, $C = (1/8)((1-x)\dee_{xx} +(4 +4/x)\dee_{xy}+(1/x-1/x^2)\dee_{yy} + (1+3/x)\dee_x
-(2/x+2/x^2)\dee_y)$, and $B = e^{3y}x^{-3}(x^3\dee_{xxx} - 3x^2\dee_{xxy} + 3x\dee_{xyy} - \dee_{yyy} - 3x^2\dee_{xx} + 9x\dee_{xy} - 6\dee_{yy} + 3x\dee_x -8\dee_y)$.
\end{Example}

Another natural question about Darboux transformations is to what extent $L$ and $M$ determine $L_1$ and $N$. As a partial answer, we have the following.

\begin{Definition}\label{uniquely determine definition}
We say that operators $L$ and $M$ {\em uniquely determine} their Darboux transformation if\/f there is at most one pair of operators $N$ and $L_1$ so that $(M,N)$ is a~DT from~$L$ to~$L_1$.
\end{Definition}

\begin{Theorem}\label{L and M determine theorem}
If $M \notin \Ko$ is a first-order operator, and $L$ is any operator, then $L$ and $M$ uniquely determine their Darboux transformation iff~$L$ can not be written as $L = AM$ for any operator~$A$.
\end{Theorem}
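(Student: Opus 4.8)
The plan is to prove the two implications of the biconditional separately, relying throughout on the basic observation recorded just after Definition~\ref{def:darb}: any Darboux transformation $(M,N)$ out of $L$ must satisfy $\sigma(N) = \sigma(M)$. Since $M$ is first order, this pins $N$ down to the form $N = M + h$ for some $h \in \Ko$; in particular any two admissible choices of $N$ differ by a function, i.e.\ by an order-zero operator. This single structural fact drives both directions.

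For the direction that a left factorization destroys uniqueness, I would argue the contrapositive: assume $L = AM$ for some $A \in \K$. For each function $h \in \Ko$ the operator $N = M + h$ has $\sigma(N) = \sigma(M)$, and setting $L_1 = NA$ gives $NL = N(AM) = (NA)M = L_1 M$, while multiplicativity of the principal symbol gives $\sigma(L_1) = \sigma(N)\sigma(A) = \sigma(M)\sigma(A) = \sigma(A)\sigma(M) = \sigma(L)$. Hence $(M,N) \colon L \rightarrow L_1$ is a genuine Darboux transformation. Distinct choices of $h$ yield distinct $N$, hence distinct pairs $(N,L_1)$, so $L$ and $M$ do not uniquely determine their Darboux transformation. (The degenerate case $L = 0 = 0 \cdot M$ is covered too, since then every pair $(N,0)$ with $\sigma(N)=\sigma(M)$ works.)

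For the converse, suppose $L$ cannot be written as $AM$ and that $(M,N)\colon L \rightarrow L_1$ and $(M,N')\colon L \rightarrow L_1'$ are two Darboux transformations. Subtracting the intertwining relations $NL = L_1 M$ and $N'L = L_1' M$ gives $(N-N')L = (L_1-L_1')M$. By the opening observation $P := N - N'$ lies in $\Ko$. If $P \neq 0$ we could solve $L = P^{-1}(L_1 - L_1')M$, exhibiting $L$ as a left multiple of $M$ and contradicting the hypothesis; hence $P = 0$ and $N = N'$. Then $(L_1 - L_1')M = 0$, and since $M \neq 0$ and $\K$ has no zero divisors we conclude $L_1 = L_1'$, so the transformation is unique.

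The argument is short, and its only real content is the observation that forces $N - N'$ to be a function: this converts the a priori operator identity $(N-N')L = (L_1-L_1')M$ into a statement about left divisibility of $L$ by $M$. The one point to check with care is that $\sigma(N) = \sigma(M)$ genuinely holds for every admissible $N$ and that the multiplicativity $\sigma(L_1) = \sigma(L)$ used in the factorization construction is valid; both are immediate from $\sigma$ being a homomorphism into the commutative symbol algebra, so I do not anticipate a serious obstacle.
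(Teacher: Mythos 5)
Your proof is correct and follows essentially the same route as the paper's: the non-uniqueness direction uses $L_1 = NA$ for varying $N$ with $\sigma(N)=\sigma(M)$, and the uniqueness direction subtracts the two intertwining relations, uses that $N-N'$ is a function to exhibit $L$ as a left multiple of $M$, and finishes by cancellation. The only cosmetic difference is that you phrase the second direction by showing $N-N'=0$ directly rather than assuming $N_1\neq N_2$ for contradiction.
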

\begin{proof}
Let $M \notin \Ko$ be a f\/irst-order operator. If $L = AM$, we may take any $N$ with $\sigma(N) = \sigma(M)$, let $L_1 = NA$, and have a Darboux transformation. For the other direction, assume that $L \neq AM$ for every operator $A$, and that we have $N_1$, $N_2$, $L_1$ and $L_2$ that give two distinct DTs, so we have $N_1 L = L_1 M$, $N_2 L = L_2 M$, and $\sigma(N_1) = \sigma(M) = \sigma(N_2)$. If $N_1 = N_2$, then $L_1M = N_1 L = N_2 L = L_2 M$, and $L_1 = L_2$ by cancellation. So we assume that $N_1 \neq N_2$. We have $\sigma(N_1) = \sigma(N_2)$ where $N_1$ and $N_2$ are f\/irst order, so $N_1 - N_2$ is a~nonzero function $f$. Now we subtract intertwining relations, and get $(N_1 - N_2) L = N_1L -N_2L = L_1M - L_2M = (L_1 - L_2)M$. Then letting $A = f^{-1}(L_1 - L_2)$, we have $AM = f^{-1}(L_1 - L_2)M = f^{-1}(N_1 - N_2)L = L$, a~contradiction.
\end{proof}

The following example is derived from Landau's famous example~\cite{Blumberg,tsarev:2009:inbook} of non-unique factorization of linear partial dif\/ferential operators. The resulting Darboux transformation of higher order exhibits a dif\/ferent pattern than Darboux transformations of order one.

\begin{Example} \label{from_Landau}
Landau's example of non-unique factorization of operators is
\begin{gather*}
R Q =Q Q P,
\end{gather*}
where $P = \dee_x + x \dee_y$, $Q = \dee_x + 1$ and the operator $R = \dee_{xx} + x \dee_{xy} + \dee_x + (2+x)\dee_y$ is irreducible over any extension of the dif\/ferential f\/ield of rational functions in $x$ and $y$.

This gives us a Darboux transformation,
\begin{gather*}
(M,N)\colon \ Q \rightarrow Q, \qquad \text{where} \quad M = QP \ \  \text{and} \  \ N = R .
\end{gather*}
Now formula~\eqref{eq:formulas:forNL1:1st:order} in not satisf\/ied for this $M$ and $N$. Indeed, assuming $N = c M c^{-1}$ or $R = cQPc^{-1}$, we obtain
$R = \dee_{xx} + x \dee_{xy} + (1-2c_xc^{-1}-xc_yc^{-1}) \dee_x + (x-xc_xc^{-1} +1) \dee_y$, which implies $2c_xc^{-1}+xc_yc^{-1}=0$ and $xc_xc^{-1} = -1$. The latter gives $c = f/x$ where $f$ does not depend on $x$, but plugging this into the former gives $c = 0$.
\end{Example}

\section{Continued Type~I Darboux transformations}\label{sec:it}

The following is an example of \textit{a second-order Darboux transformation that is not of Type~I, nor obtained from a factorization, nor a multiple of a DT of Wronskian type}.

\begin{Example} \label{new type} Let $M = \dee_{xx} + 1$, $F = \dee_x$, $C = \dee_y + x$, $L = CM + F$ and $L_1 = MC + F$, and consider the Darboux transformation
\begin{gather*}
(M,M) \colon \  L \rightarrow L_1 .
\end{gather*}
Indeed, $NL = M(CM + F) = MCM + MF =MCM + FM = (MC + F)M = L_1M$.

Observe that for any operators $A$, $B$ and $C$, that $\ker A \cap \ker (B + CA) = \ker A \cap \ker B$. Thus $\ker L \cap \ker M = \ker (CM + F) \cap \ker M =
\ker F \cap \ker M = \ker F \cap \ker (M - \dee_x F) = \ker F \cap \ker 1 = \{0\}$, so this transformation is not obtained from a factorization, nor a multiple of a DT of Wronskian type. $F$ is not in $\Ko$, so the transformation is also not of Type~I.
\end{Example}

Our long term goal is to classify Darboux transformations for an operator of general form. The example above is of neither type known to us. It can be understood in terms of the following general phenomenon. Suppose that we have operators $L$ and $M$ so that $L = CM + F$ for some operators $C,F \notin \Ko$. So the corresponding Darboux transformation cannot be of Type~I. We can rewrite this as $F = L - CM$, where we have written $F$ as a particular kind of linear combination of~$L$ and~$M$. If $F$ were a function~$f$, we could get a~Type~I transformation where $NL = L_1M$. Now suppose $F$ is not a function, but that we have $M = AF + f$ for some operator $A$ and nonzero function~$f$. We can write this as $f = M - AF$, and view this as reaching $f$ in two linear
combination steps. The f\/irst gives~$F$, and the second gives~$f$.

As in Example \ref{new type}, we have $\ker L \cap \ker M = \ker (CM + F) \cap \ker M = \ker F \cap \ker M = \ker F \cap \ker (AF - f) = \ker F \cap \ker f = \{0\}$. Thus a Darboux transformation with this $L$ and $M$ cannot be obtained from a factorization, nor is it a multiple of a DT of Wronskian type.

These new Darboux transformations constitute a class which is appreciably ``larger'' than that of Type~I transformations.

\begin{Example} \label{dee_xxy example}
Consider operators on the two variables $x$ and $y$, and let $L$ have the form $a_{001}\dee_{xxy} + a_{00}\dee_{xx} + a_{01}\dee_{xy} + a_0\dee_x + a_1\dee_y + a$. Not every such $L$ has a Type~I transformation $(M,N) \colon L \rightarrow L_1$, for if we for instance write $L = CM + f = (b_{00}\dee_{xx} + b_0\dee_x + b)(c_1\dee_y + c) + f$ we get $a_{001} = b_{00}c_1$ and f\/ive other equations for the f\/ive unknown functions $b_{00}$, $b_0$,~$b$,~$c_1$ and~$c$. Such a~system usually has no solution, and trying the other possible forms of $C$ and $M$ is little help.

On the other hand, almost every such $L$ does have a DT $(M,N) \colon L \rightarrow L_1$. We will write $L = CM + F$, where $M = AF + f$ and $A$ is a f\/irst-order operator. Then letting $N = f F f^{-1} A + f$, we have $\sigma(N) = \sigma(M)$ and $NL = N(CM + F) = NCM + (f F f^{-1} A + f) F =
NCM + f F f^{-1}A F + f F = NCM + f F f^{-1}A F + f F f^{-1} f = NCM + (f F f^{-1})(AF + f) = (NC + f F f^{-1})M$. Setting $L_1 = NC + f F f^{-1}$, this is the desired intertwining relation.

We now let $C = g\dee_y + h$, $F = p\dee_x + q$, and $A = b\dee_x + c$, where $g$, $h$, $p$, $q$, $b$ and $c$ are unknown functions. We want $L = CM + F = C(AF + f) + F = (g\dee_y + h)((b\dee_x + c)(p\dee_x + q) + f) + p\dee_x + q = (g\dee_y + h)((bp\dee_{xx} + r\dee_x + s) + p\dee_x + q$, where we write $r = cp + bq + bp_x$ and $s = cq + bq_x + f$. Multiplying this out and equating coef\/f\/icients, we obtain $a_{001} = gbp$, $a_{00} = hbp + g(bp)_y$, $a_{01} = rg$, $a_0 = hr + gr_y + p$, $a_1 = gs$, and $a = hs + gs_y + q$. Nothing is lost by setting $bp = 1$, so we do so. This gives $g = a_{001}$ and $h = a_{00}$.
Then $r = a_{01}/g = a_{01}/a_{001}$ and $s = a_1/g = a_1/a_{001}$. Thus $p$ and $q$ are determined, as $p = a_0 - hr - gr_y$ and $q = a - hs - gs_y$, and $bp = 1$ gives $b = 1/p$. Now we get $c$ and $f$ from $r$ and $s$, giving $c = (r -bq -bp_x)/p$ and then $f = s - cq -bq_x$.
\end{Example}

While the above examples had two linear combination steps, this can be extended to any number of linear combination steps, giving the following theorem.

\begin{Theorem}\label{Types_above_1_theorem}
Let $L \in \K$, and suppose there are nonzero operators $A_1, A_2,$ $\dots, A_k$, $M = M_1, M_2, \dots, M_k \in \K$ for some $k \geq 1$ and a nonzero \mbox{$f = M_{k+1} \!\in\! \Ko$} so that $M_{i-1} = A_i M_i + M_{i+1}$, $1 \leq i \leq k$, where $M_0=L$, i.e.,
\begin{gather*}
L = A_1 M_1 + M_{2}, \\
M_1  = A_2 M_2 + M_3, \\
\cdots\cdots\cdots\cdots\cdots\cdots \\
M_{i-1} = A_i M_i + M_{i+1}, \qquad \text{for} \quad 1 \leq i \leq k, \quad \text{and finally} \\
\cdots\cdots\cdots\cdots\cdots\cdots \\
M_{k-1}  = A_k M_k + f.
\end{gather*}

Then there exists a Darboux transformation for operator $L$
\begin{gather*}
(M_1,N)\colon \  L \rightarrow L_1
\end{gather*}
defined as follows. Define $N_{k+1} = M_{k+1} = f$ and$N_{k} = fM_kf^{-1}$, and define $N_i$ for $0 \leq i \leq k-1$ by downward recursion using
\begin{gather*}
N_i = N_{i+1}A_{i+1} + N_{i+2} .
\end{gather*}
Finally let $N = N_1$, and let $L_1 = N_0$. Then $M$ and $N$ have the same principal symbol, and the intertwining relation $NL = L_1M$ holds. The corresponding Darboux transformation is not obtained from a factorization, nor a multiple of a DT of Wronskian type, and if $k > 1$ it is not of Type~I.
\end{Theorem}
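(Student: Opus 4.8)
The plan is to reduce all three assertions to a single family of ``local'' intertwining identities,
\begin{gather*}
N_{i+1} M_i = N_i M_{i+1}, \qquad 0 \le i \le k,
\end{gather*}
established by downward induction on $i$. The base case $i = k$ is immediate from the definitions $N_{k+1} = M_{k+1} = f$ and $N_k = f M_k f^{-1}$, since both sides equal $f M_k$. For the inductive step, assume the identity at index $i$ and use the two recursions $M_{i-1} = A_i M_i + M_{i+1}$ and $N_{i-1} = N_i A_i + N_{i+1}$ (the latter being the defining recursion $N_i = N_{i+1}A_{i+1} + N_{i+2}$ re-indexed by $i \mapsto i-1$). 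Expanding,
\begin{gather*}
N_i M_{i-1} = N_i A_i M_i + N_i M_{i+1}, \qquad N_{i-1} M_i = N_i A_i M_i + N_{i+1} M_i,
\end{gather*}
and these agree precisely because $N_i M_{i+1} = N_{i+1} M_i$ is the hypothesis. Running $i$ from $k$ down to $1$ yields the identity at index $0$, which is exactly $N_1 M_0 = N_0 M_1$, i.e.\ $NL = L_1 M$.

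For the principal symbols I would apply $\sigma$ to the same chain. As $\sigma$ is multiplicative on products, each identity gives $\sigma(N_{i+1})\sigma(M_i) = \sigma(N_i)\sigma(M_{i+1})$ in the commutative polynomial ring of symbols, which is a domain. Starting from $\sigma(N_{k+1}) = f = \sigma(M_{k+1})$ and substituting $\sigma(N_{i+1}) = \sigma(M_{i+1})$ into the relation at index $i$, I may cancel the nonzero factor $\sigma(M_{i+1})$ (nonzero because each $M_{i+1}$ is a nonzero operator) to descend to $\sigma(N_i) = \sigma(M_i)$. A downward induction then gives $\sigma(N) = \sigma(N_1) = \sigma(M_1) = \sigma(M)$. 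Together with $NL = L_1 M$ this is a genuine Darboux transformation, since equality of the symbols of $M$ and $N$ forces $\sigma(L) = \sigma(L_1)$.

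To see that the transformation is neither obtained from a factorization nor a multiple of a Wronskian-type one, I would telescope the kernel intersection as in Example~\ref{new type}. The identity $\ker P \cap \ker(Q + RP) = \ker P \cap \ker Q$ applied to $M_i = A_{i+1} M_{i+1} + M_{i+2}$ gives $\ker M_i \cap \ker M_{i+1} = \ker M_{i+1} \cap \ker M_{i+2}$, so
\begin{gather*}
\ker L \cap \ker M = \ker M_0 \cap \ker M_1 = \ker M_1 \cap \ker M_2 = \cdots = \ker M_k \cap \ker f = \{0\},
\end{gather*}
the last step because $f$ is a nonzero function. As recorded earlier, a factorization DT has $\ker M \subseteq \ker L$, and a (multiple of a) Wronskian-type DT has its seed solutions in $\ker L \cap \ker M$; each would force this intersection to be nonzero, so our DT is of neither kind.

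The delicate point, and the step I expect to be the main obstacle, is showing that the transformation is not of Type~I when $k > 1$. A Type~I transformation with transformation operator $M = M_1$ requires a representation $L = C M_1 + f$ with $f \in \Ko$, that is, with a remainder of order zero, whereas the chain exhibits $L = A_1 M_1 + M_2$ with $M_2$ of positive order. To turn this into a contradiction one must exclude a competing low-order decomposition: subtracting the two gives $(C - A_1) M_1 = M_2 - f$, and I would argue that the reduced remainder of $L$ modulo left multiples of $M_1$ is unique (by an order count as in Example~\ref{new type}, in the spirit of Theorem~\ref{L and M determine theorem}), so it cannot be simultaneously the positive-order operator $M_2$ and a function $f$. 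Making this uniqueness precise in the multidimensional setting, where $M_1$ need not be monic in a single derivation, is the real work; under the natural nondegeneracy hypothesis that $L$ is not a left multiple of $M_1$ it pins down $N$ and $L_1$ and thereby excludes Type~I.
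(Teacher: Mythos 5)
Your proposal reproduces the paper's proof essentially step for step on everything the paper actually proves: the same downward induction establishing the telescoping identities $N_i M_{i-1} = N_{i-1} M_i$ (the paper starts the induction at $i=k+1$ and adds $N_{i-1}A_{i-1}M_{i-1}$ to both sides, which is your step re-indexed), the same second induction giving $\sigma(N_i)=\sigma(M_i)$ by cancellation in the commutative domain of symbols, and the same kernel telescoping $\ker L \cap \ker M = \ker M_k \cap \ker M_{k+1} = \{0\}$ to exclude factorizations and multiples of Wronskian type. All of these steps are correct as you have written them.

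The one place you stop short --- the clause that for $k>1$ the transformation is not of Type~I --- is precisely the place where the paper's own proof is silent: after handling $k=1$ (where the construction literally is a Type~I transformation), the printed proof establishes only the intertwining relation, the symbol equality, and the kernel statement, and never returns to the Type~I exclusion. So your proposal is not weaker than the published argument, and your diagnosis of what is missing is accurate. A Type~I transformation with the same $M=M_1$ would require $L = CM_1 + g$ with $g\in\Ko$, and ruling this out amounts to a uniqueness statement for the remainder of $L$ under left division by $M_1$; this is genuinely delicate for higher-order $M_1$ in several variables, as the paper's own Remark on quasi-factorizations and Example~\ref{from_Landau} (Landau's non-unique factorization) make clear, and Theorem~\ref{L and M determine theorem} covers only first-order $M$. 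Note also that some hypothesis like $M_2,\dots,M_k\notin\Ko$ is implicitly needed: if an intermediate $M_i$ is a function the chain collapses and the resulting $(M,N)$ can coincide with a genuine Type~I transformation. In short, your route is the paper's route, and the single claim you could not close is one the paper does not close either.
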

\begin{proof}
Fix $k$. If $k = 1$, we have $M_2 = f$, so $L = M_0 = A_1M_1 + M_2 = A_1M_1 + f$. Then $N = N_1 = fM_1f^{-1}$, and $L_1 = N_0 = N_1A_1 + N_2 = fM_1f^{-1}A_1 + f$. We have $NL = L_1M$, since this is a~Darboux transformation of Type~I.

Henceforth assume $k > 1$. We have that $\ker(L) \cap \ker(M) = \ker(M_0) \cap \ker(M_1) \subseteq \ker(M_1) \cap \ker(M_2)$, the last step because $M_2$ is
a linear combination of $M_0$ and $M_1$. Now $M_{i-1} = A_i M_i + M_{i+1}$ for all $i$, so each $M_{i+1}$ is a~linear combination of $M_{i-1}$ and
$M_i$, giving us similarly that $\ker(M_{i-1}) \cap \ker(M_i) \subseteq \ker(M_i) \cap \ker(M_{i+1})$ for all $1 \leq i \leq k$. Thus $\ker(M_0) \cap \ker(M_1) \subseteq \ker(M_1) \cap \ker(M_2) \subseteq \ker(M_2) \cap \ker(M_3) \subseteq \cdots\subseteq \ker(M_k) \cap \ker(M_{k+1})$. But $\ker(M_{k+1}) = \ker(f) = \{ 0 \}$, so $\ker(L) \cap \ker(M) = \{ 0 \}$, and our transformation will not be obtained from a~factorization, nor a~multiple of a DT of Wronskian type.

We will now do two inductive proofs of statements $S_i$, starting with $i=k$ as the basis and showing $S_i \Rightarrow S_{i-1}$ for all $i \geq 1$.

We f\/irst show $N_i M_{i-1} = N_{i-1}M_i$ for $1 \leq i \leq k+1$. When $i=1$, we have $N = N_1$, $L = M_0$, $L_1 = N_0$ and $M = M_1$, so this will be $NL = L_1M$. Our basis is when $i=k+1$. Then we have $N_i M_{i-1} = N_{k+1} M_k = f M_k = f M_k f^{-1} f = N_k M_{k+1} = N_{i-1}M_i$, as desired. Now let $1 < i \leq k+1$, and assume $N_i M_{i-1} = N_{i-1}M_i$. Adding $N_{i-1}A_{i-1}M_{i-1}$ to both sides, we get $N_i M_{i-1} + N_{i-1}A_{i-1}M_{i-1} = N_{i-1}M_i + N_{i-1}A_{i-1}M_{i-1}$. But $N_i + N_{i-1}A_{i-1} = N_{i-2}$ and $M_i + A_{i-1}M_{i-1} = M_{i-2}$, so we have $N_{i-2}M_{i-1} = N_i M_{i-1} + N_{i-1}A_{i-1}M_{i-1} = N_{i-1}M_i + N_{i-1}A_{i-1}M_{i-1} = N_{i-1}M_{i-2}$. Reversing the order of equality, $N_{i-1}M_{i-2} = N_{i-2}M_{i-1}$, which is the required statement with $i$ replaced by $i-1$ throughout. Thus $N_i M_{i-1} = N_{i-1}M_i$ for $1 \leq i \leq k+1$ by induction.

We can now show that $M_i$ and $N_i$ have the same principal symbols for $ 0 \leq i \leq k+1$, which will give $\sigma(L) = \sigma(L_1)$ since $M_0 = L$ and $N_0 = L_1$. To start with, we have $N_{k+1} = f = M_{k+1}$, so $\sigma(N_{k+1}) = \sigma(M_{k+1})$. Now we let $0 \leq i \leq k+1$, and assume $\sigma(N_i) = \sigma(M_i)$. Since $N_i M_{i-1} = N_{i-1}M_i$ from the previous paragraph, we get $\sigma(M_{i-1}) = \sigma(N_{i-1})$. Thus the desired conclusion follows by induction.
\end{proof}

This theorem then gives us a new type of Darboux transformation, which generalize Type~I transformations.

\begin{Definition}\label{iterated type definition}
Darboux transformations def\/ined as in Theorem~\ref{Types_above_1_theorem} we shall call {\em continued Type~I Darboux transformations}.
\end{Definition}

A natural question is how Darboux transformations of continued Type~I relate to the theory developed by E.I.~Ganzha in~\cite{ganzha2013intertwining}. Let us consider transformations of continued Type~I for $k = 2$, so we have $L = A_1 M_1 + M_2$ and $M_1 = A_2 M_2 + M_3$ where $M_3 = f$. Intertwining Laplace transformations depend on writing $L$ in a particular form, where $L = X_1 X_2 - H$. In our notation, the def\/inition from~\cite{ganzha2013intertwining} then gives $M = X_2$, $\omega = -[ X_2, H ] H^{-1}$ and $N = X_2 + \omega$, where we would like $\omega$ to be a dif\/ferential operator and not merely pseudo-dif\/ferential. Substituting $M = M_1$ for $X_2$, we have $L = A_1 M_1 + M_2 = X_1 X_2 - H$. So it would be natural to try setting $X_1 = A_1$, giving $H = -M_2$. In this case, we get $\omega = -[ M_1, H ] H^{-1} = -(M_1 (-M_2) - (-M_2) M_1)(-M_2)^{-1} = (M_2 M_1 - M_1 M_2)M_2^{-1}$. Expanding this using $M_1 = A_2 M_2 + f$, it becomes $(M_2(A_2 M_2 +f)-(A_2 M_2 +f)M_2)M_2^{-1} = ((M_2 A_2 - A_2 M_2 - f)M_2 + M_2 f)M_2^{-1}$, which is not a~dif\/ferential operator unless $M_2$ and $f$ commute, which is seldom true.

We have that transformations of continued Type~I are not obtained from a factorization, nor a multiple of a DT of Wronskian type. We also note that when $M$ is f\/irst order, that the continued Type~I Darboux transformation $NL = L_1M$ must be of Type~I. It turns out that all continued Type~I DTs are invertible.

Some insight into why this is can be obtained by looking at the construction of the continued Type~I DT $(M_1,N_1) \colon L \rightarrow L_1$ by alternately forming shifts and duals of a simple starting DT. With notation as in Theorem~\ref{Types_above_1_theorem}, we have $M_{k+1} = N_{k+1} = f$, and $N_k = f M_k f^{-1}$. Then $(M_k,N_k) \colon M_{k+1} \rightarrow N_{k+1}$ is an invertible DT, since it is Type~I. Now $(M_k,N_k) \colon M_{k+1} + A_k M_k \rightarrow N_{k+1} + N_k A_k$ is a shift of $(M_k,N_k) \colon M_{k+1} \rightarrow N_{k+1}$, and also an invertible DT. But $M_{k+1} + A_k M_k = M_{k-1}$ and
$N_{k+1} + N_k A_k = N_{k-1}$, so this DT is actually $(M_k,N_k) \colon M_{k-1} \rightarrow N_{k-1}$. Then its dual $(M_{k-1},N_{k-1}) \colon M_k \rightarrow N_k$ is also an invertible DT. Now we shift again, getting that $(M_{k-1},N_{k-1}) \colon A_{k-1} M_{k-1} + M_k \rightarrow N_{k-1} A_{k-1} + N_k$ or
$(M_{k-1},N_{k-1}) \colon M_{k-2} \rightarrow N_{k-2}$ is an invertible DT. Then we take the dual of this, and continue the process, eventually obtaining that
$(M_1,N_1) \colon M_0 \rightarrow N_0$ or $(M,N) \colon L \rightarrow L_1$ is an invertible DT.

The above process is constructive, in the sense that the inverse of $(M,N) \colon L \rightarrow L_1$ can be recovered using the descriptions of the inverses of a shift and a dual in Lemmas~\ref{shift invertibility lemma} and~\ref{dual is invertible lemma}. Doing this gives recursive def\/initions, as in the inductive proof of the following result.

\begin{Theorem}\label{iterated invertible theorem}
All continued Type~I Darboux transformations are invertible.
\end{Theorem}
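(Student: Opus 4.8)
The plan is to prove invertibility by induction following exactly the alternating shift--dual construction sketched in the paragraph preceding the theorem, but phrased existentially so as to avoid propagating explicit inverse formulas. With notation as in Theorem~\ref{Types_above_1_theorem}, I would prove the statement $P(j)$: \emph{the Darboux transformation $(M_j,N_j)\colon M_{j-1}\to N_{j-1}$ is invertible}, for $1\le j\le k$, and then read off the theorem as the case $P(1)$, since $M_0=L$, $N_0=L_1$, $M_1=M$ and $N_1=N$. The whole argument is carried out in the index $j$, descending from $k$ to $1$, with each descent consisting of one dual step and one shift step.

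For the base case $P(k)$, I would first note that $(M_k,N_k)\colon M_{k+1}\to N_{k+1}$ is of Type~I (as observed just before the theorem): writing $M_{k+1}=f=0\cdot M_k+f$ puts it in the form $L=CM+f$ with $C=0$, and then $N_k=fM_kf^{-1}$ and target $f=N_{k+1}$ match the Type~I data, so it is invertible. Shifting this DT by $A_k$ in the sense of Definition~\ref{shift definition} gives the same pair $(M_k,N_k)$ as a morphism $M_{k+1}+A_kM_k\to N_{k+1}+N_kA_k$, and the defining recursions $M_{k-1}=A_kM_k+M_{k+1}$ and $N_{k-1}=N_kA_k+N_{k+1}$ identify this as $(M_k,N_k)\colon M_{k-1}\to N_{k-1}$; Lemma~\ref{shift invertibility lemma} then yields $P(k)$. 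For the inductive step, assuming $P(j)$ with $2\le j\le k$, I take the dual (Definition~\ref{dual definition}) of $(M_j,N_j)\colon M_{j-1}\to N_{j-1}$ to obtain $(M_{j-1},N_{j-1})\colon M_j\to N_j$, invertible by Lemma~\ref{dual is invertible lemma}, and then shift it by $A_{j-1}$; using $M_{j-2}=A_{j-1}M_{j-1}+M_j$ and $N_{j-2}=N_{j-1}A_{j-1}+N_j$ this becomes $(M_{j-1},N_{j-1})\colon M_{j-2}\to N_{j-2}$, invertible by Lemma~\ref{shift invertibility lemma}. This is $P(j-1)$, so iterating down to $j=1$ gives the claim. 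When $k=1$ there is no inductive step and the base case is already the conclusion.

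The proof has no hard computational core; the only delicate point is the bookkeeping, namely ensuring that each shift by $A_i$ converts the current object into the next operator of the chain and that the dual flips source and target in the correct direction, so that the index steps down by exactly one per shift--dual pair. I expect the easiest place to slip is aligning the recursion $N_{i-1}=N_iA_i+N_{i+1}$ with the form $N_{i+1}+N_iA_i$ demanded by the shift in Definition~\ref{shift definition}; once one observes that a shift appends the shifting operator on the \emph{right} of $N$, the two agree and the induction is immediate. Should an explicit inverse be desired rather than mere invertibility, one would instead thread the inverse data through Lemmas~\ref{shift invertibility lemma} and~\ref{dual is invertible lemma} at every step, producing the recursive formulas alluded to before the theorem; this is strictly more work and is not needed for the invertibility statement itself.
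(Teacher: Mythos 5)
Your argument is correct, and it is a genuinely different proof from the one the paper actually gives. The paper sketches your shift--dual alternation informally in the paragraph preceding the theorem as ``insight,'' but its formal proof is an explicit computation: it introduces the recursively defined operators $P_i$, $P'_i$, $R_i$, $R'_i$, establishes the quasideterminant-style identities $P_iP'_{i+1}=P_{i+1}P'_i$, $P'_kR_{k-1}=R'_kP_{k-1}+(-1)^k$, etc., writes $L=P_kM_k+P_{k-1}M_{k+1}$ and $L_1=N_kP'_k+N_{k+1}P'_{k-1}$, and then exhibits the inverse in closed form as $M'=(-1)^kf^{-1}P'_k$, $N'=(-1)^kP_kf^{-1}$ with $A=(-1)^kf^{-1}R'_k$ and $B=(-1)^kR_kf^{-1}$. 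What your route buys is brevity and conceptual clarity: everything reduces to Lemmas~\ref{shift invertibility lemma} and~\ref{dual is invertible lemma}, which are already proved, and the index bookkeeping you flag does line up correctly ($M_{i-1}=A_iM_i+M_{i+1}$ and $N_{i-1}=N_iA_i+N_{i+1}$ are exactly the shift of Definition~\ref{shift definition} applied to the dual). What the paper's computation buys is the explicit inverse and the explicit $A$, $B$ of equations~\eqref{inv:prop1}--\eqref{inv:prop2}, which a purely existential induction does not deliver without unwinding the recursions. One small point worth making explicit in your base case: with $C=0$ the Type~I data degenerate to $(M_k,fM_kf^{-1})\colon f\to f$, whose inverse is the pair $(0,0)$ with $A=B=-f^{-1}$ (so that $M'M=0=1+(-f^{-1})f$); this is consistent with the paper's inverse formula $\bigl(-\tfrac{1}{f}C,-C\tfrac{1}{f}\bigr)$ but is degenerate enough that a one-line direct verification is cleaner than an appeal to the general Type~I statement.
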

\begin{proof}
Let $NL = L_1M$ be of continued Type~I, and let $k$, $L = M_0$, $A_1, A_2, \dots, A_k$, $M = M_1, M_2, \dots, M_k$ and a nonzero function $f = M_{k+1}$ be as in Theorem~\ref{Types_above_1_theorem}. As there, we have $M_{i-1} = A_i M_i + M_{i+1}$ for $1 \leq i \leq k$, and also def\/ine $N_i$ by $N_{k+1} = M_{k+1} = f$, $N_{k} = fM_kf^{-1}$, and $N_i = N_{i+1}A_{i+1} + N_{i+2}$. for $0 \leq i \leq k-1$.

Using $M_{i-1} = A_i M_i + M_{i+1}$ repeatedly, we can write $L = M_0 = P_kM_k + Q_kM_{k+1}$. The operator $P_k$ is of interest to us, so we def\/ine it inductively as follows. We let $P_0 = 1$, $P_1 = A_1$, and $P_{i+1} = P_iA_{i+1} + P_{i-1}$ for all $i \geq 1$. For example, $P_2 = P_1A_2 + P_0 = A_1A_2 + 1$.

We also need related operators $P'_i$, created by reversing the order of all the products in $P_k$. For example, $P'_2$ is $A_2A_1 + 1$. We def\/ine these
inductively by letting $P'_0 = 1$, $P'_1 = A_1$, and $P'_{i+1} = A_{i+1}P'_i + P'_{i-1}$ for all $i \geq 1$.

We need the fact that $P_iP'_{i+1} = P_{i+1}P'_i$ for all $i \geq 0$, and prove this by induction. For the basis, we have $i = 0$ and get $P_0P'_1 = 1 A_1 = A_1 1 = P_1P'_0$. Now assume that $i \geq 1$ and that $P_{i-1}P'_i = P_iP'_{i-1}$. We get $P_iP'_{i+1} = P_i(A_{i+1}P'_i + P'_{i-1}) = P_iA_{i+1}P'_i + P_iP'_{i-1} = P_iA_{i+1}P'_i + P_{i-1}P'_i = (P_iA_{i+1} + P_{i-1})P'_i = P_{i+1}P'_i$, which proves the claim.

We have $L = M_0 = P_kM_k + P_{k-1}M_{k+1}$, and show this by proving that $M_0 = P_iM_i + P_{i-1}M_{i+1}$ for $1 \leq i \leq k$ by induction on~$i$. Our basis is where $i = 1$. Here we have $M_0 = A_1M_1 + 1 M_2 = P_1M_1 + P_0M_2$. Now assume that $M_0 = P_iM_i + P_{i-1}M_{i+1}$, we will show $M_0 = P_{i+1}M_{i+1} + P_iM_{i+2}$. We have $M_0 = P_iM_i + P_{i-1}M_{i+1} = P_i(A_{i+1}M_{i+1} + M_{i+2}) + P_{i-1}M_{i+1} = (P_iA_{i+1}+P_{i-1})M_{i+1} + P_iM_{i+2}= P_{i+1}M_{i+1} + P_iM_{i+2}$, as required.

A similar inductive proof shows that $L_1 = N_0 = N_k P'_k + N_{k+1}P'_{k-1}$. We leave it to the reader.

To construct the inverse transformation $N'L_1 = LM'$, we let $N' = (-1)^k P_k f^{-1}$ and $M' = (-1)^k f^{-1}P'_k$. Then $N'L_1 =
(-1)^k P_k f^{-1}(N_k P'_k + N_{k+1}P'_{k-1}) = (-1)^k P_k f^{-1}(f M_k f^{-1} P'_k + f P'_{k-1}) = (-1)^k P_k ( M_k f^{-1} P'_k + P'_{k-1}) = (-1)^k (P_k M_k f^{-1} P'_k + P_k P'_{k-1})$. Similarly, we have $L M' = (P_kM_k + P_{k-1}M_{k+1})((-1)^k f^{-1}P'_k) = (-1)^k (P_kM_k f^{-1}P'_k + P_{k-1}ff^{-1}P'_k) =
(-1)^k (P_kM_k f^{-1}P'_k + P_{k-1}P'_k) = (-1)^k (P_k M_k f^{-1} P'_k + P_k P'_{k-1})$, where the last step follows from $P_{k-1}P'_k = P_k P'_{k-1}$. Thus $N'L_1 = LM'$ as required.

To establish invertibility, we also need to show that there are operators $A$ and $B$ with $M'M = 1 + AL$ and $MM' = 1 + BL_1$ as in equations~\eqref{inv:prop1} and~\eqref{inv:prop2}.

Similarly to the def\/initions of $P_i$ and $P'_i$, we def\/ine the sequences $R_i$ and $R'_i$ recursively. We take $R_1 = R'_1 = 1$, $R_2 = R'_2 = A_2$ and set $R_{i+1} = R_i A_{i+1} + R_{i-1}$ and $R'_{i+1} = A_{i+1} R'_i + R'_{i-1}$ for $i \geq 2$.

We can now prove $M_1 = R_i M_i + R_{i-1} M_{i+1}$ for $2 \leq i \leq k$, by induction. We have $M_1 = A_2 M_2 + M_3 = R_2 M_2 + R_1 M_3$ for our basis.
And assuming $M_1 = R_i M_i + R_{i-1} M_{i+1}$, we get $M_1 = R_i M_i + R_{i-1} M_{i+1} = R_i (A_{i+1} M_{i+1} + M_{i+2}) + R_{i-1} M_{i+1} =
(R_i A_{i+1} + R_{i-1}) M_{i+1} + R_i M_{i+2} = R_{i+1} M_{i+1} + R_i M_{i+2}$. This gives us $M = M_1 = R_k M_k + R_{k-1} M_{k+1}$.

Next we need several identities, and will prove by induction that $P'_i R_i = R'_i P_i$, $P'_i R_{i-1} = R'_i P_{i-1} + (-1)^i$ and $R'_{i-1} P_i = P'_{i-1} R_i + (-1)^i$ for $2 \leq i \leq k$. When $i = 2$, these become $(A_2 A_1 + 1)A_2 = A_2(A_1 A_2 + 1)$, $(A_2 A_1 + 1)1 = A_2 A_1 + (-1)^2$ and $1 (A_1 A_2 + 1) = A_1 A_2 + (-1)^2$, all of which are true. Now we assume $P'_i R_i = R'_i P_i$, $P'_i R_{i-1} = R'_i P_{i-1} + (-1)^i$ and $R'_{i-1} P_i = P'_{i-1} R_i + (-1)^i$, and calculate as follows. First, $P'_{i+1} R_{i+1} = (A_{i+1} P'_i + P'_{i-1})(R_i A_{i+1} + R_{i-1}) = A_{i+1} P'_i R_i A_{i+1} + A_{i+1} P'_i R_{i-1} + P'_{i-1} R_i A_{i+1} + P'_{i-1} R_{i-1}) = A_{i+1} R'_i P_i A_{i+1} + A_{i+1} (R'_i P_{i-1} + (-1)^i) + (R'_{i-1} P_i - (-1)^i) A_{i+1} + R'_{i-1} P_{i-1}) = A_{i+1} R'_i P_i A_{i+1} + A_{i+1} R'_i P_{i-1} + R'_{i-1} P_i A_{i+1} + R'_{i-1} P_{i-1}) = (A_{i+1} R'_i + R'_{i-1})(P_i A_{i+1} + P_{i-1}) =
R'_{i+1} P_{i+1}$, as desired. Next, $P'_{i+1} R_i = (A_{i+1} P'_i +P'_{i-1}) R_i = A_{i+1} P'_i R_i + P'_{i-1} R_i = A_{i+1} R'_i P_i + (R'_{i-1} P_i - (-1)^i) = (A_{i+1} R'_i + R'_{i-1}) P_i + (-1)^{i+1} = R'_{i+1} P_i + (-1)^{i+1}$, showing the second equality. Finally, $R'_i P_{i+1} =
R'_i (P_i A_{i+1} + P_{i-1}) = R'_i P_i A_{i+1} + R'_i P_{i-1} = P'_i R_i A_{i+1} + (P'_i R_{i-1} - (-i)^i) = P'_i (R_i A_{i+1} + R_{i-1}) + (-i)^{i+1}) =
P'_i R_{i+1} + (-i)^{i+1}$, giving the third equality. Thus we have $P'_k R_k = R'_k P_k$, $P'_k R_{k-1} = R'_k P_{k-1} + (-1)^k$ and $R'_{k-1} P_k = P'_{k-1} R_k + (-1)^k$.

We also need that $R_{k-1} P'_k = R_k P'_{k-1} + (-1)^k$. We leave the easier inductive proof of this to the reader.

Now we have $M'M = (-1)^k f^{-1} P'_k (R_k M_k + R_{k-1} M_{k+1}) = (-1)^k f^{-1} P'_k R_k M_k + (-1)^k f^{-1} P'_k $ $\times R_{k-1} M_{k+1} = (-1)^k f^{-1} R'_k P_k M_k + (-1)^k f^{-1} (R'_k P_{k-1} + (-1)^k) M_{k+1} = (-1)^k f^{-1} R'_k (P_k M_k + P_{k-1}) + (-1)^k f^{-1}(-1)^k M_{k+1} = f^{-1} M_{k+1} + (-1)^k f^{-1} R'_k (P_k M_k + P_{k-1}) = 1 + ((-1)^k f^{-1} R'_k)L$. This is $1 + AL$, where $A$ is $(-1)^k f^{-1} R'_k$.

Similarly, we get $MM' = (R_k M_k + R_{k-1} M_{k+1})(-1)^k f^{-1} P'_k = (-1)^k (R_k f^{-1} f M_k + R_{k-1} f^{-1} f$ $\times M_{k+1}) f^{-1} P'_k = (-1)^k (R_k f^{-1} N_k + R_{k-1} f^{-1} N_{k+1}) P'_k = (-1)^k (R_k f^{-1} N_k + R_{k-1}) P'_k$, since $N_k = f M_k f^{-1}$ and $N_{k+1} = f$. Continuing, $(-1)^k (R_k f^{-1} N_k + R_{k-1}) P'_k = (-1)^k (R_k f^{-1} N_k P'_k + R_{k-1} P'_k) = (-1)^k (R_k f^{-1} N_k P'_k + R_k P'_{k-1} + (-1)^k) = (-1)^k (R_k f^{-1} N_k P'_k + R_k f^{-1} N_{k+1} P'_{k-1}) + (-1)^2k = (-1)^k R_k f^{-1} (N_k P'_k + N_{k+1} P'_{k-1}) + 1 = (-1)^k R_k f^{-1} (N_k P'_k + N_{k+1} P'_{k-1}) + 1 = 1 + (-1)^k R_k f^{-1} L_1$. This is $1 + BL_1$, where $B = (-1)^k R_k f^{-1}$.
\end{proof}

\section{Continued Wronskian type Darboux transformations}\label{sec:weak:wr}

We may generalize the previous construction slightly, by removing the requirement that $M_{k+1}$ be a function. Much of the theory still works, provided $M_{k+1} = cB$ where $B$ and $M_k$ commute. We say that the resulting transformations are of {\em continued Wronskian type}. They generalize f\/irst-order Wronskian type transformations. Their relation with high-order Wronskians remains to be clarif\/ied. Here is the analogue of Theorem~\ref{Types_above_1_theorem}.

\begin{Theorem}\label{generalized iterated type theorem}
Let $k \geq 1$ be given, and suppose there are nonzero operators $L = M_0$, $A_1, A_2,$ $\dots, A_k$, $M = M_1, M_2, \dots, M_{k+1}$ so that
\begin{gather*}
M_{i-1} = A_i M_i + M_{i+1}
\end{gather*}
for $1 \leq i \leq k$, and there is a function $c$ with
\begin{gather*}
M_{k+1} = cB \notin K
\end{gather*}
and
\begin{gather*}
M_k B = B M_k .
\end{gather*}
Then we can define a Darboux transformation $NL = L_1M$ as follows. Define $N_{k+1} = M_{k+1}$ and $N_{k} = c M_k c^{-1}$, and define $N_i$ for $0 \leq i \leq k-1$ by downward recursion using $N_i = N_{i+1}A_{i+1} + N_{i+2}$. Finally let $N = N_1$, and let $L_1 = N_0$. Then $M$ and $N$ have the same principal
symbol, and $NL = L_1M$ is a Darboux transformation.
\end{Theorem}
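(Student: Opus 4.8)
The plan is to adapt the proof of Theorem~\ref{Types_above_1_theorem} almost verbatim, since the two constructions coincide except in the terminal data: there $M_{k+1}$ was a nonzero function $f$ with $N_{k+1} = f$ and $N_k = fM_kf^{-1}$, while here $M_{k+1} = cB$ is a genuine operator with $N_{k+1} = cB$ and $N_k = cM_kc^{-1}$. The heart of the argument is the family of intermediate identities
\begin{gather*}
N_i M_{i-1} = N_{i-1} M_i, \qquad 1 \leq i \leq k+1,
\end{gather*}
which I would prove by downward induction on $i$ with basis $i = k+1$. The case $i = 1$ of this family reads $N_1 M_0 = N_0 M_1$, that is $NL = L_1 M$, which is exactly the intertwining relation we want.

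The only place where the present hypotheses are actually used, and hence the only step that differs from Theorem~\ref{Types_above_1_theorem}, is the basis $i = k+1$, namely $N_{k+1}M_k = N_k M_{k+1}$. I would verify it by direct computation: the left side is $cB M_k$, while the right side is $(cM_kc^{-1})(cB) = cM_kB$, and since $c$ is an invertible function these agree if and only if $BM_k = M_kB$. Thus the commutativity hypothesis $M_kB = BM_k$ is precisely what makes the basis hold, playing the role that the self-cancellation $f^{-1}f = 1$ played in the function case. This is the crux of the theorem, although it amounts to a one-line computation rather than a genuine obstacle.

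The inductive step $S_i \Rightarrow S_{i-1}$ then goes through with no change, since it invokes only the two defining recursions and not the terminal data. Assuming $N_i M_{i-1} = N_{i-1}M_i$ and adding $N_{i-1}A_{i-1}M_{i-1}$ to both sides, the substitutions $N_{i-2} = N_i + N_{i-1}A_{i-1}$ and $M_{i-2} = A_{i-1}M_{i-1} + M_i$ collapse the two sides to $N_{i-2}M_{i-1}$ and $N_{i-1}M_{i-2}$ respectively, giving the statement for $i-1$. Descending from the basis yields all the required identities.

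It remains to check $\sigma(M) = \sigma(N)$, which I would handle exactly as in Theorem~\ref{Types_above_1_theorem}, by a second descending induction, now on principal symbols. The basis $\sigma(N_{k+1}) = \sigma(M_{k+1})$ is immediate from $N_{k+1} = M_{k+1}$. Given $\sigma(N_i) = \sigma(M_i)$, passing to symbols in $N_i M_{i-1} = N_{i-1}M_i$ yields $\sigma(M_i)\sigma(M_{i-1}) = \sigma(N_{i-1})\sigma(M_i)$, and cancelling the nonzero factor $\sigma(M_i)$ in the polynomial symbol ring gives $\sigma(M_{i-1}) = \sigma(N_{i-1})$. Hence $\sigma(M_i) = \sigma(N_i)$ for all $0 \leq i \leq k+1$; in particular $\sigma(M) = \sigma(N)$ and $\sigma(L) = \sigma(L_1)$, so $(M,N)\colon L \rightarrow L_1$ is indeed a Darboux transformation.
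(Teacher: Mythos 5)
Your proposal is correct and follows essentially the same route as the paper: the paper likewise reduces everything to the proof of Theorem~\ref{Types_above_1_theorem} and isolates the single changed step as the basis case, verified by the same computation $N_{k+1}M_k = cBM_k = cM_kB = cM_kc^{-1}\cdot cB = N_kM_{k+1}$ using the commutativity hypothesis. Your identification of $M_kB = BM_k$ as the exact replacement for the $f^{-1}f = 1$ cancellation in the function case matches the paper's reasoning.
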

\begin{proof}
The proof is essentially the same as that of Theorem \ref{Types_above_1_theorem}, so we will only indicate the parts that change. When $k = 1$ and $M_{k+1} \notin K$, we will not have a Type~I Darboux transformation.

The proof of $NL = L_1 M$ is by induction. We need the hypothesis that $M_k$ and $B$ commute for the basis, and have $N_{k+1}M_k = M_{k+1}M_k = cB M_k =
c M_k B = c M_k c^{-1} cB = N_kM_{k+1}$. After that, the proof works as before.
\end{proof}

Continued Wronskian type Darboux transformations are seldom invertible. As in our remarks preceding Theorem~\ref{iterated invertible theorem}, we have that
the Darboux transformation $(M,N) \colon L \rightarrow L_1$ is obtained by repeatedly forming shifts and duals, starting with $(M_k,N_k) \colon M_{k+1} \rightarrow N_{k+1}$ which is $(M_k,cM_kc^{-1}) \colon cB \rightarrow cB$. So $(M,N) \colon L \rightarrow L_1$ is invertible if\/f $(M_k,cM_kc^{-1}) \colon cB \rightarrow cB$ is, by Lemmas~\ref{shift invertibility lemma} and~\ref{dual is invertible lemma}.

Since a DT can only be invertible if it has $\ker L \cap \ker M = \{0\}$, it is worth noting that a~continued Wronskian type~DT has $\ker L \cap \ker M = \{0\}$ if\/f $\ker M_k \cap\ker M_{k+1} = \{ 0 \}$. In fact, $M_{i-1} = A_i M_i + M_{i+1}$ gives us $\ker M_{i-1} \cap\ker M_i = \ker M_i \cap\ker M_{i+1}$ for all $i$, so $\ker L \cap\ker M =\ker M_0 \cap\ker M_1 = \dots = \ker M_k \cap\ker M_{k+1}$.

\pdfbookmark[1]{References}{ref}
\LastPageEnding

\end{document}